	\newcommand{\ts}{\thinspace} 
     	\newcommand{\tod}{\xrightarrow{d}}
	\newcommand{\topr}{\xrightarrow{\mathbb{P}}}
	\newcommand{\R}{\mathbb{R}}
	\newtheorem{Def}{Definition}[section]       
        \newtheorem{Thm}[Def]{Theorem}
        \newtheorem{rem}[Def]{Remark}
       \newcommand{\HD}{\textcolor{red}} 
       \newcommand{\lk}{\textcolor{blue}} 
        \DeclarePairedDelimiter\abs{\lvert}{\rvert}%
        \DeclarePairedDelimiter\norm{\lVert}{\rVert}%
        \let\oldabs\abs
        \def\abs{\@ifstar{\oldabs}{\oldabs*}}
        \let\oldnorm\norm
        \def\norm{\@ifstar{\oldnorm}{\oldnorm*}}
\title{Testing for similarity of dose response in multi-regional clinical trials}
\author{  Holger Dette$^2$ \and Lukas Koletzko $^2$  \and Frank Bretz$^1$ }
\date{\begin{small}
    $^1$\textit{Novartis Pharma AG}\\%
    $^2$\textit{Ruhr-Universität Bochum}\\[5ex]%
    \end{small}
    \date{}
}
\begin{document}
\maketitle

\begin{abstract}
    This paper addresses the problem of deciding whether the dose response relationships between subgroups and the full population in a multi-regional trial are similar to each other. Similarity is measured in terms of the maximal deviation between the dose response curves. We consider a parametric framework and develop two powerful bootstrap tests for the similarity between the  dose response curves of one subgroup and the full population, and for the similarity between the dose response  curves of several subgroups and  the full population. We prove the validity of the tests, investigate the finite sample properties by means of a simulation study and finally illustrate  the methodology in  a case study.
\end{abstract}

{\it Keywords and Phrases:} equivalence testing of curves, subgroup analysis, constrained bootstrap
\section{Introduction}
  \def\theequation{1.\arabic{equation}}	
  \setcounter{equation}{0}

A multi-regional clinical trial is a single study carried out simultaneously across various regions under a common protocol to investigate the effect of an investigational drug. Its primary goal is often to draw conclusions about the drug's effect across all the regions participating in the trial. Conducted within the framework of a global drug development program, a multi-regional clinical trial aimed at bridging purposes can facilitate the drug's registration across all involved regions. In recent years, such trials have received increasing attention for their potential to reduce resources by avoiding the need for multiple, regional trials \citep{ich2017}. Accordingly, multi-regional clinical trials typically have at least two main objectives: demonstrating the drug's efficacy within individual regions and comparing study results across regions to confirm that the drug's effects are not influenced by ethnic or other regional factors.

The \cite{ich2017} guideline on general principles for planning and design of multi-regional clinical trials highlights the importance of identifying intrinsic and/or extrinsic ethnic factors that could impact drug responses in the early stages of drug development, prior to the design of confirmatory multi-regional studies. Consequently, recent early-phase studies, especially those focusing on dose response, are increasingly being conducted across multiple countries or regions \citep{song2019strategic}. The primary aim of multi-regional dose response studies is to establish the dose response relationship using data from the full (i.e., global) population. Once this primary goal is achieved, it becomes important to evaluate whether the results applicable to the full population can be reliably extended to specific regions. To this end, the dose response relationship observed in any given region should be similar to that of the full population. If significant discrepancies are observed, further investigation into the intrinsic and/or extrinsic ethnic factors affecting these outcomes may be necessary. 

\medskip

Many authors consider the problem of how to choose the subgroup sample sizes in multi-regional confirmatory trials in order to allow observing a consistent treatment effect between a regional subgroup and the full population with acceptable probability  \citep[see, among others][]{liao2018sample,teng2018practical,chiang2019use}. 
More recently, \cite{yamaguchi2021sample} and \cite{kaneko2023method} address the problem of sample size allocation 
 for demonstrating consistency (or similarity)
in multi-regional dose finding trials. In particular, \cite{yamaguchi2021sample} measure consistency between dose response profiles of a regional subgroup and the global population by  the probability that the maximal deviation between the two curves falls below a fixed threshold.  In order to facilitate the sample size calculations, \cite{kaneko2023method} restricts the calculation of the maximum deviation to the used dose levels 
and derives an approximation formula for the proposed consistency probability. 

\medskip

Equivalence tests pursue a similar approach 
and are a common tool to decide whether parameters of interest such as the area under the curve
or the peak concentration  of two groups are similar. Here, 
the   null hypothesis is defined as an effect exceeding a given threshold  and under the alternative the  effect is smaller  than this bound. Meanwhile  there 
exist well developed methodology on testing the equivalence of finite dimensional parameters
\cite[see, for example][and the references therein]{wellek2010testing}. While a large body of this literature refers to 
applications in medicine, in particular pharmacokinetics,  equivalence tests have also been used in other areas such as economics, psychology or biology \citep[see, for example,][]{KimRobinsonAndrew2019,lakens2020equivalence,ROSE201877}. A common feature of all these references consists in the fact that equivalence refers  to finite  dimensional (often one-) dimensional parameters, which should be close with respect to some metric.

The problem of investigating the similarity between curves, however, is far less explored.
\cite{cade2011} and \cite{ostrovski2022} develop tests for the similarity of quantile curves and power law distributions, respectively.
In the context of drug development,  \cite{liuhaywyn2007, liubrehaywynn2009, gsteiger2011} and \cite{bretz2018assessing} develop tests 
for the similarity between dose response curves of two distinct groups of patients utilizing a parametric model assumption and  confidence  bands for the difference  between the two curves. More recently, \cite{dette2018equivalence, mollenhoff2020equivalence, moedetbre2021} propose 
more powerful similarity tests based on   constrained parametric bootstrap. For a Bayesian approach to the problem we also mention \cite{ollier2021estimating} and the references therein. A common feature in all these references consists in the fact that  parameters or curves corresponding to two different and independent groups are compared.

\medskip

In this paper, we consider the problem of assessing whether the dose response relationships of one or more (regional) subgroups and the full population are similar from a hypothesis testing point of view. Similar to much of the aforementioned literature, we define a parametric model for the clinical trial data and formulate appropriate statistical hypotheses that capture the similarity problem. We propose  powerful parametric bootstrap tests for the similarity between the dose response curves of one subgroup and the full population   and for the similarity between the dose response curves of several subgroups and the full population.
The validity of these procedures is proved rigorously and the performance is analyzed in various scenarios by means of a simulation study which includes small sample sizes. Finally, we illustrate the use of these tests in a multi-regional dose finding case study. 
Our work differs from  that of \cite{yamaguchi2021sample} and \cite{kaneko2023method} with respect to several aspects. First, the  dose response functions of the subgroups in our model are not restricted to be identical. 
In fact, they are allowed to have different parametric forms. Second, our model allows not only for two, but also for more than two subgroups and  we also propose a test for simultaneously assessing the similarity between several subgroups and the full population. Third, we compare the dose response curve on the full dose range and not only at the dose levels used in the clinical trial. Fourth, we develop a constrained bootstrap test to obtain critical values and $p$-values.

\medskip

The remainder of this paper is organized as follows. In Section \ref{sec2} we introduce the framework and methodology for assessing similarity of one or several subgroups with the full population. Section \ref{sec3} and Section \ref{example} are dedicated to a simulation study and numerical example, respectively. We conclude with a discussion in Section \ref{disc} and give all mathematical details in the Appendix in Section \ref{appendix}. 

\section{Assessing similarity of 
subgroups with the  full population}
\label{sec2}
  \def\theequation{2.\arabic{equation}}	
  \setcounter{equation}{0}

Suppose a dose response trial is conducted with $r$ dose levels $d_1, \ldots , d_r$ in the dose range  $\mathcal{D}=[d_1, d_r]$, where $d_1 = 0$ denotes the placebo group. Assume that a population  of patients can be decomposed into $k$ disjoint subgroups (corresponding to the  different regions in a multi-regional clinical trial). Let $n_\ell$ 
denote the number of patients belonging  to subgroup  $\ell = 1, \ldots , k$,  and denote by   $n = n_1 + \ldots + n_k$ the total number of patients recruited for the trial. Each patient is randomized to one of the dose levels $d_1, \ldots , d_r $. Let   $n_{\ell, j}$ denote the number of patients in subgroup $\ell$ which are treated at dose level $d_j$ ($j=1, \ldots , r$). Following \cite{bretz2005combining}, \cite{thomas2006hypothesis}, 
\cite{bretz2018assessing, mollenhoff2020equivalence, yamaguchi2021sample}, we assume that the  dose response relationships in each  subgroup can be described by a (possibly non-linear) parametric dose response model, say  $\mu_{\ell}(\cdot, \beta_\ell): \mathcal{D} \to \R $ with a $\gamma_\ell$-dimensional parameter $\beta_\ell$. 
We model the response of the $i$th patient treated with dose $d_j$
in  subgroup $\ell$ as a normal distributed random variable 
 with  variance $\sigma_\ell^2 > 0$ and mean $\mu_{\ell}(d_j, \beta_\ell)$, that is
 \begin{align}
    \label{data1}
    Y_{\ell i j} = \mu_{\ell}(d_j, \beta_\ell) + \epsilon_{\ell ij}, \quad  i=1, \ldots , n_{\ell, j} , ~j=1, \ldots , r, ~\ell =1, \ldots , k ~, 
\end{align}
where $\epsilon_{\ell ij}$  are independent centered normal distributed errors with variance $\sigma^2_\ell >0$. This means that  we  assume the patient responses to be independent. Note that the dose response models $\mu_\ell$ and variances $\sigma_\ell^2$ 
are  allowed to be different for the $k$ different subgroups. We are interested in testing whether the mean effect in a particular subgroup  is similar to the mean effect in the full population. For this purpose  we model the mean treatment effect in the full population as a weighted average of the regional treatment effects, where the weights represent the share of each region in the global or overall  effect. Similar modelling approaches are employed by \cite{bean2023bayesian}  and \cite{kaneko2023method} among others.
Following these authors we assume that the $k$  subgroup proportions in the full population are known and denote these
by  $p_1, \ldots , p_k$, where $p_\ell$ represents the positive proportion of the $\ell$th subgroup ($\sum_{\ell =1}^k p_\ell =1$). We  define an overall (population) effect at dose $d$  by 
\begin{align}
\label{det1}
\bar\mu (d, \beta) := \sum^k_{\ell = 1} p_\ell  \mu_\ell (d, \beta_\ell)~, 
\end{align}where  $\beta = (\beta_1^\top, \ldots , \beta_k^\top)^\top$
denotes the vector of all parameters in the regression models $\mu_1. \ldots , \mu_k$
corresponding to the different subgroups.
With these notations we can investigate the problem of testing similarity between the dose response curves of one or more subgroups and the full population.

\subsection{Assessing similarity of one subgroup with the  full population} \label{sec21}

Without loss of generality we assume that the dose response curve of the first  subgroup $\ell=1$  has to be compared with the full population. We will address this problem by estimating the maximum deviation 
\begin{align} \label{det2}
    d_{\infty} := d_{\infty}(\beta) := \max_{ {d} \in  \cal{D} } | \mu_1 (d, \beta_1) - \bar\mu {(d, \beta)} |
\end{align}
between the (expected) dose response curve $\mu_1$ in subgroup $1$ and the dose response curve $\bar \mu$ defined in \eqref{det1}.
In the literature, maximum deviation distances of the form \eqref{det2} are considered to investigate the similarity of curves from two independent groups, see for example \cite{liubrehaywynn2009, gsteiger2011, dette2018equivalence, moedetbre2021}. In the context of multi-regional clinical trials, \cite{yamaguchi2021sample} use this distance to study differences between  the dose response curves of a subgroup and the full  population consisting of $k=2$ subgroups.

In order to investigate if the observed dose response relationship of a specific subgroup (here the first one) is  sufficiently similar to that of the full population we will develop  a test for the hypotheses
\begin{align}
\label{hyp1}
    H_0: d_{\infty}  \geq \Delta ~ \text{versus} ~ H_1:  d_{\infty}  < \Delta,
\end{align}
where  $\Delta > 0$  is a given threshold that
depends on the clinical relevance in a particular application.
Note that rejecting $H_0$ in \eqref{hyp1} 
means to decide 
that the absolute difference between the dose response curves of the regional subgroup and the full population is smaller than $\Delta$ over the whole dose range while keeping the probability for a type 1 error bounded by the significance level, say $\alpha$.

\begin{rem}
\label{rem1}
{ \normalfont
In the special case $k=2$, the hypotheses in \eqref{hyp1} reduce to the hypotheses  considered in \cite{yamaguchi2021sample} (up to a constant factor). Note that in this case $p_2=1-p_1$ and the  curve for the full  population \eqref{det1} reduces to 
\begin{align*}
    \bar\mu(d, \beta) =  p_1 \ts \mu_{1}(d, \beta_1) + (1-p_1) \ts \mu_{2}(d, \beta_2), 
\end{align*}
which yields for the maximum deviation distance in \eqref{det2} the representation
\begin{align*}
    d_{\infty} = (1-p_1) \max_{ {d} \in  \cal{D} }|  \mu_1 (d, \beta_1) - \mu_{2}(d, \beta_2)  |.
\end{align*}
In this case the alternative hypothesis in \eqref{hyp1} coincides with statement (10) in \cite{kaneko2023method}. 
}
\end{rem}

To estimate the unknown maximal deviation  $d_\infty$ in \eqref{det2} we use a plug-in estimator defined by
\begin{align}
    \label{estim1}
   \hat d_{\infty}  := d_{\infty}(\hat \beta) = \max_{ {d} \in  \cal{D} } | \mu_1 (d, \hat \beta_1) - \bar\mu {(d, \hat \beta)} |,
\end{align}
where $\hat \beta =  ( \hat \beta_1^\top ,  \ldots , \hat \beta_k^\top)^\top$ and  $ \hat \sigma^2 = ( \hat \sigma_1^2 ,  \ldots  , \hat \sigma_k^2)^\top $
are the maximum likelihood estimators (mle) of the parameters
 $\quad \beta = (\beta_1^\top, \ldots , \beta_k^\top)^\top$ and 
 $ \sigma^2 = (\sigma_1^2, \ldots  , \sigma_k^2)^\top$, respectively, 
 maximizing
the log-likelihood function 
\begin{align}
\label{mle}
   \log( L(\beta, \sigma^2) ) =  -\sum_{\ell = 1}^{k} \sum_{j=1}^{r} \sum_{i = 1}^{n_{\ell, j}}\Big \{ \log( (2 \pi \sigma_\ell^2)^{1/2} ) +  \dfrac{1}{2 \sigma_\ell^2} (Y_{\ell i j } - \mu_\ell(d_j, \beta_\ell ) )^2  \Big\} .
\end{align}
We reject the null hypothesis in \eqref{hyp1} for small values of the statistic $\hat d_{\infty} $. However, the distribution of $\hat d_{\infty} $ under the null is complicated and critical values are difficult to obtain (see Theorem \ref{thm2} in the Appendix).  
To address this problem, we propose to use a non-standard 
constrained parametric bootstrap test for the hypotheses \eqref{hyp1}. The pseudo-code for this procedure is given in Algorithm \ref{alg2}.

\begin{algorithm}[H]
\small 

\caption{Constrained parametric bootstrap test for hypotheses \eqref{hyp1}}
                \label{alg2}

       		\begin{itemize}
       			\item[(1)] Calculate the  mle $ (\hat{\beta}, \ \hat \sigma^2) $ 
          and test statistic $\hat{d}_{\infty}$ in \eqref{estim1}.
       			\item[(2)] Calculate a constrained version of the mle of $\beta$ defined by
       			\begin{align*}
       			 \hat{\hat{\beta}}  = \begin{cases}
       			\hat{\beta}, \ \hat{d}_{\infty} \geq \Delta \\
       		\tilde{\beta}, \ \hat{d}_{\infty} < \Delta.
       			\end{cases}
       			\end{align*}
                    where $\tilde{\beta}$ is the mle of $\beta$ in the set $\{ \beta : d_{\infty}(\beta) = \Delta \}.$
       			\item[(3)] For $\ell = 1, \ldots , k$, $j=1, \ldots , r$, $i=1, \ldots , n_{\ell, j}$ 
          generate  bootstrap data 
                \begin{align*}
                    Y_{\ell ij}^{*} = \mu_{\ell}(d_j, \hat{\hat{\beta}}) + \epsilon_{\ell ij}^*,
                \end{align*}
                where $\epsilon_{\ell ij}^*$ are independent centered normal distributed with variance $ \hat \sigma^2_\ell $.
       			
  \item[(4)] Calculate  the mle $\hat \beta^*$  from the bootstrap data 
  $\{ Y_{\ell ij}^{*} \}_{\ell = 1, \ldots , k; j=1, \ldots , r;  i=1, \ldots , n_{\ell, j} } $
  and 
    the $\alpha$-quantile  $\hat{q}_{\alpha}^*$  of the distribution of
   \begin{align}
       \label{det23}
   \hat{d}_{\infty}^{*} := d_{\infty}(\hat \beta^* ) = \max_{ {d} \in  \cal{D} } | \mu_1 (d, \hat \beta_1^*) - \bar\mu {(d, \hat \beta ^*)} |.
   \end{align}
  
  \item[(5)] Reject the null hypothesis in \eqref{hyp1}, whenever
       	\begin{align}
       	\label{boot}
            \hat{d}_{\infty} < \hat{q}_{\alpha}^{*}. 
       	\end{align}
       		
\end{itemize}

\end{algorithm}

\begin{rem} ~~
{\rm  
\begin{itemize}
\item[(a)] In practice, the quantile $\hat q_\alpha^*$ in Algorithm \ref{alg2} is simulated by the empirical $\alpha$-quantile of $B$ realizations of the bootstrap statistic $\hat d_\infty^*$ in \eqref{det23}. More precisely, if $ \hat{d}_{\infty}^{*,(1)}, \ldots , \hat{d}_{\infty}^{*,(B)} $ denote 
$B$ independent bootstrap copies of $\hat{d}_{\infty}^{*}$ generated by Algorithm \ref{alg2}, the empirical $\alpha$-quantile of this sample, say $\hat{q}_{\alpha}^{*,B}$, is used as an estimate of $\hat{q}_{\alpha}^{*}$. 
In a similar way we can define a $p$-value for testing the hypotheses \eqref{hyp1} by 
\begin{align}
\label{pvalue1}
    p_{ \infty } := \tfrac{1}{B} \sum_{b=1}^{B} \mathbbm 1 \{ \hat d_\infty^{*,(b)} \leq \hat d_\infty \}.
\end{align}

    \item[(b)]
In   Theorem \ref{boot1} in the Appendix we establish the validity of this bootstrap test.  For  sufficiently large
 sample sizes the test \eqref{boot} keeps its nominal level $\alpha$. 
To be more precise,  consider the set\begin{align*}
                {\cal E} = \{ d \in \mathcal{D} : |\mu_1(d, \beta_1) - \bar \mu(d, \beta) | = d_\infty  \},
   \end{align*}
    which consists of all dose levels 
    where the (absolute) difference between the two dose response curves is maximal.
   In most applications the 
    set ${\cal E}$ has only one element (see Section \ref{sec3} for some typical non-linear regression models used in dose response trials) and in such situations the properties of the test can be easily described. If $d_\infty >\Delta  $  (we call this region  {\it interior of the null hypothesis in \eqref{hyp1}})  the probability of rejection  converges to $0$ for increasing sample sizes.
      If $d_\infty = \Delta  $  (we call this  region  {\it boundary  of the   hypotheses  in \eqref{hyp1}})  the probability of rejection  converges to the nominal level  $\alpha$ for increasing sample sizes.  Moreover, the test \eqref{boot} detects the  alternative with a probability converging to $1$ with increasing sample size, which means that it is consistent.
      A rigorous formulation of this result and a discussion of the case where $\mathcal{E}$ consists of more than one point can be found in Theorem \ref{boot1}. The finite sample properties of the test are investigated in Section \ref{sec31} by means of a simulation study.
     \item[(c)]
     As pointed out in Remark \ref{rem1}, 
  \cite{yamaguchi2021sample} and \cite{kaneko2023method}  assess the similarity  between the dose response curves of one subgroup and the full population consisting of only two subgroups. However, even in this case the testing approach  considered in the present paper  differs from their method as follows.  \cite{yamaguchi2021sample} and \cite{kaneko2023method} consider the size of a ``consistency" (or ``assurance") probability $ \mathbb P( \hat d_\infty < \Delta) $ 
 which is calculated under the assumption that the two curves are 
 exactly identical. In particular, they do not aim for a control of the probability 
 $\mathbb P( \hat d_\infty \geq  \Delta) $ of a type I error, in contrast to the constrained bootstrap test \eqref{boot}. 
 Consequently, in the case of two groups, their test cannot be compared directly with the one proposed in this paper as it is not calibrated at the correct nominal level. For example, Table 6  in \cite{kaneko2023method} has to be interpreted with some care as the different tests under  consideration are not calibrated for the same type I error rate.
\end{itemize}
}
\end{rem}

\begin{rem}
\label{rem2}
{\rm  
    Note that the hypotheses  in \eqref{hyp1} are nested.
    Recalling the definition of the 
    bootstrap  in Algorithm \ref{alg2} 
    it is easy to see that $\hat{d}_{\infty , \Delta_1}^{*} \leq \hat{d}_{\infty , \Delta_2}^{*}$, where $\Delta_1 \leq \Delta_2$ and $\hat{d}_{\infty ,\Delta}^{*}$ denotes the bootstrap statistic \eqref{det23} calculated by  Algorithm \ref{alg2} for the threshold $\Delta$. Consequently, we obtain for the corresponding quantiles the inequality $\hat{q}_{\alpha , \Delta_1}^{*} \leq \hat{q}_{\alpha , \Delta_2}^{*}$, and 
  rejecting the null hypothesis in \eqref{hyp1}  
  by the test \eqref{boot}
  for $\Delta= \Delta_0$ also yields  rejection of the null 
  for all  $\Delta>\Delta_0$.
\\
 Therefore,  by the sequential rejection principle, we may simultaneously test the  hypotheses  in \eqref{hyp1} for different $\Delta \geq  0$   starting at $\Delta  = 0$ and 
   increasing  $\Delta $ to 
   find the minimum value $ \hat \Delta_\alpha  $
   for which 
   $H_0$ is rejected for the first time. This value could be interpreted as a measure of evidence  for similarity with a controlled type I error  $\alpha$.
}
\end{rem}

\subsection{Assessing similarity of several subgroups with the  full population}

In this section we extend the methodology for investigating the similarity of $1 \leq m \leq k $ subgroups 
with the full population. Without loss of generality we assume  that we are interested in dose response curves  
corresponding to the  subgroups $1, \ldots , m$ and consider the distance
\begin{align}
\label{maxall}
    d_{\infty, \infty} := d_{\infty, \infty}(\beta) := \max_{ 1 \leq i  \leq m } \max_{d \in \mathcal{D}}  | \mu_i (d, \beta_i) - \bar\mu {(d, \beta)} |.
\end{align}
 In order to establish simultaneously 
 the similarity of the dose response curves in the subgroups $1, \ldots ,m $ with the dose response curve of the full population
 we consider the  null hypothesis
\begin{align}
\label{hyp4}
    H_0: d_{\infty, \infty}  \geq \Delta \quad \text{versus} \quad H_1:d_{\infty, \infty} < \Delta
\end{align}
for a pre-specified threshold $\Delta > 0$. By the intersection-union principle \citep[see, for example, ][]{Sonnemann}, it can be tested by applying the test \eqref{boot} from Section \ref{sec21} for each subgroup $i = 1, ... , m$. The null hypothesis \eqref{hyp4} is then rejected if and only if all individual tests reject  the individual null hypotheses of similarity between the $i$th curve and the dose response curve of the full population. 

However, as tests based on the intersection-union principle can be conservative, we propose an alternative, more powerful test in the following.
To this end, recall the definition of the estimators $\hat \beta_1 ,\ldots  , \hat \beta_m$ and $\hat \beta $ in Section \ref{sec21}. We estimate $d_{\infty, \infty}$ by  
\begin{align}
\label{det22}
\hat  d_{\infty, \infty} := d_{\infty, \infty}(\hat \beta) = \max_{ 1 \leq i  \leq m } \max_{d \in \mathcal{D}}  | \mu_i (d, \hat\beta_i) - \bar\mu {(d, \hat \beta)} |
\end{align}
and reject the null hypothesis in \eqref{hyp4} for small values of   $\hat  d_{\infty, \infty}$.
The corresponding quantiles are obtained by the  constrained parametric bootstrap test in Algorithm \ref{alg3}, where the decision rule is defined by \eqref{allboot}. In practice, the quantile $\hat{q}_{\alpha , \infty }^*$ in \eqref{allboot} is estimated by the empirical $\alpha$-quantile of the bootstrap sample
$\hat d_{\infty, \infty}^{*,(1)}  , \ldots , \hat d_{\infty, \infty}^{*,(B)}$, where 
for $b=1, \ldots , B$  the quantity 
$\hat d_{\infty, \infty}^{*,(b)}$ is generated by Algorithm \ref{alg3}. The $p$-value for testing the hypotheses \eqref{hyp4} is defined by 
\begin{align}
\label{pvalue2}
 p_{ \infty, \infty } := \tfrac{1}{B} \sum_{b=1}^{B} \mathbbm 1 \{ \hat d_{\infty, \infty}^{*,(b)} \leq \hat d_{\infty, \infty} \}.
\end{align}
In Section \ref{alg2proof} of the Appendix we show that the decision rule \eqref{allboot} defines a valid test for the hypotheses \eqref{hyp4}. The finite sample properties of this test are investigated in Section \ref{sec32} by means of a simulation study.

\begin{algorithm}[H]
\small 

\caption{Constrained parametric bootstrap test for hypotheses \eqref{hyp4}}
                \label{alg3}
       		\begin{itemize}
       			\item[(1)] Calculate the mle $ (\hat{\beta}, \ \hat \sigma^2) $ and the test statistic $\hat{d}_{\infty, \infty}$ in \eqref{det22}.
       			\item[(2)] Calculate a constrained version of the mle of $\beta$ defined by
       			\begin{align*}
       			 \hat{\hat{\beta}}  = \begin{cases}
       			\hat{\beta}, \ \hat{d}_{\infty, \infty} \geq \Delta \\
       		\tilde{\beta}, \ \hat{d}_{\infty, \infty} < \Delta.
       			\end{cases}
       			\end{align*}
                    where $\tilde{\beta}$ is the mle of $\beta$ in the set $\{ \beta : d_{\infty, \infty}(\beta) = \Delta \}.$
       			\item[(3)] For $\ell = 1, \ldots , k$, $j=1, \ldots , r$, $i=1, \ldots , n_{\ell, j}$ 
          generate  bootstrap data 
                \begin{align*}
                    Y_{\ell ij}^{*} = \mu_{\ell}(d_j, \hat{\hat{\beta}}) + \epsilon_{\ell ij}^*,
                \end{align*}
                where $\epsilon_{\ell ij}^*$ are independent, centered normal distributed with variance $ \hat \sigma^2_\ell $.
       			
  \item[(4)] Calculate  the mle $\hat \beta^*$  from the bootstrap data 
  $\{ Y_{\ell ij}^{*} \}_{\ell = 1, \ldots , k; j=1, \ldots , r;  i=1, \ldots , n_{\ell, j} } $
  and the $\alpha$-quantile  $\hat{q}_{\alpha , \infty }^*$  of the  distribution of
    $$ \hat  d_{\infty, \infty}^{*} := d_{\infty, \infty}( \hat \beta^* ) = \max_{ 1 \leq i  \leq m } \max_{d \in \mathcal{D}}  | \mu_i (d, \hat\beta_i^*) - \bar\mu {(d, \hat \beta^*)} |.$$
  
  \item[(5)] Reject the null hypothesis in \eqref{hyp4}, whenever
       	\begin{align}
       	\label{allboot}
            \hat{d}_{\infty,\infty} < \hat{q}_{\alpha , \infty}^{*}. 
       	\end{align}
       		
\end{itemize}

\end{algorithm}

\section{Finite sample properties}

\label{sec3} 

  \def\theequation{3.\arabic{equation}}	
  \setcounter{equation}{0}
  
In this section we investigate the finite sample performance of the bootstrap procedures defined in Algorithm \ref{alg2} and Algorithm \ref{alg3} by means of a simulation study. The constrained maximum likelihood estimators $\tilde \beta$ (see step (2) in both Algorithms) are computed with the \texttt{auglag} function provided by the \texttt{alabama} package in \texttt{R}.

\subsection{Assessing similarity of one subgroup with the full population}
\label{sec31}

Our simulation setup is inspired by a multi-regional clinical trial design for a Phase II dose finding study for an anti-anxiety drug first considered in \cite{pinheiro2006design} and subsequently investigated in \cite{yamaguchi2021sample} and \cite{kaneko2023method}. 
We consider a population with $k=3$ regional subgroups, where $\ell=1,2,3$ denote the Japanese, North American, and European regions, respectively, and assume the proportions $p_1 = 0.1, \ts p_2 = 0.3$ and $ p_3 = 0.6$. We test for similarity between the full population and the Japanese subgroup ($\ell=1$) using the test \eqref{boot} in Algorithm \ref{alg2} for the hypotheses \eqref{hyp1}.
%
The dose range is $\mathcal{D} = [0, 150]$ (in mg) and we consider six dose levels $0$, $10$, $25$, $50$, $100$ and $ 150$. We fix a total sample size of $n= 450$ patients, but consider two cases for the 
sample size allocations across subgroups:
\begin{align}
\label{det3a}  &  n_1 = 150,~ n_2 = 150, ~n_3 = 150  \\
\label{det3b}   &  n_1 = 66, ~~n_2 =  192, ~ n_3 = 192
\end{align}
The motivation for these two scenarios is to investigate a balanced patient allocation in scenario \eqref{det3a} and a more realistic allocation in scenario \eqref{det3b} where the proportion of Japanese patients in the trial is smaller than those of the two other regions.
In both  scenarios \eqref{det3a} and \eqref{det3b} 
  we investigate two choices for the 
  number of patients allocated at each dose 
  level
  \begin{itemize}
      \item[$\mathcal{D}_{=}$:] In each subgroup the same number of 
patients is  treated  at each dose level 
      $0,$  $ 10,$ $ 25,$ $ 50,$ $ 100,$ and $ 150$.
            \item[$\mathcal{D}_{\neq}^{(1)}$:] In scenario \eqref{det3a} in each subgroup 
            $35, 20, 20, 20, 20$ and $ 35$
patients are treated  at dose levels 
      $0, 10, 25, 50, 100$ and $ 150$, respectively.
     \item[$\mathcal{D}_{\neq}^{(2)}$:] In scenario \eqref{det3b} in the first subgroup 15, 9, 9, 9, 9 and 15 are treated at dose levels $0, 10, 25, 50, 100$ and $ 150$, respectively, whereas in the other two subgroups 46, 25, 25, 25, 25 and 46 patients are treated at those dose levels.
  \end{itemize}
For the dose response curves in the three subgroups we consider E-max curves defined by
\begin{align}
\label{emax}
   E_0 +  \dfrac{ E_{\max} \cdot d ^h }{d^h + ED_{50}^h },
\end{align}
 where $E_0$ represents the 
 placebo effect of the drug (obtained for $d=0$), $E_{max}$ denotes the maximum effect, $ED_{50}$ is the dose which produces half of $E_{max}$ and $h$ denotes the slope (or Hill-) parameter which controls the steepness of the dose response curve. The errors in model \eqref{data1} are assumed to be centered normal distributed with standard deviation $\sigma_\ell=0.1$ $(\ell=1,2,3$).
For the mean functions we distinguish three scenarios (A), (B) and (C) (see Table \ref{scenarios}), 
\begin{table}[H]
    \centering
    \begin{tabular}{c|c|c|c|}
      \cline{2-4}
      & $\mu_1$ & $\mu_2$ & $\mu_3$ \\
      \hline
     \multicolumn{1}{|c|}{(A)} & $(0, E_{max}, ED_{50}, 1)$ & $(0, 0.46, 26, 1)$ & $(0, 0.46, 25.5, 1)$ \\
     \multicolumn{1}{|c|}{(B)} & $(0, E_{max}, 25, h)$ & $(0, 0.46, 26, 1)$ & $(0, 0.46, 25.5, 1)$ \\
     \multicolumn{1}{|c|}{(C)} & $(0, E_{max}, ED_{50}, h)$ & $(0, 0.46, 27, 2.5)$ & $(0, 0.46, 26.5, 2.5)$ \\
     \hline
    \end{tabular}
    \caption{\it Parameters $(E_0, E_{max}, ED_{50}, h)$ in the E-Max models $\mu_1$, $\mu_2$ and $\mu_3$ (see equation \eqref{emax}). The non-specified parameters for $\mu_1$ are defined in the simulation.}
    \label{scenarios}
\end{table}

where the parameters $E_{max}, ED_{50}$ and $h$ of the first curve are varied for simulating the rejection probability of the Algorithms in the interior, on the boundary of the null hypothesis and under the alternative. The choice of these parameters is inspired by the (single) E-Max candidate dose response model considered in \cite{yamaguchi2021sample} who set the parameters as  $E_0 = 0, E_{max} = 0.46, \ts ED_{50} = 25$ and $h=1$. In scenario (A) and (B) the curve for the first subgroup is chosen to deviate more and more from this reference dose response model as we vary the parameters $ED_{50}$, respectively $h$. Scenario (C) is added to also investigate the case, where both the $ED_{50}$ and Hill-coefficient of the Japanese curve vary simultaneously. Note that in all three scenarios (A), (B) and (C) the curves for the North American and European subgroup are chosen to be very close to each other as this would generally be expected in practice. To reflect reality, the $E_{max}$ parameter of the Japanese curve is also chosen very similar to the one for the North American and European subgroup (approximately 0.46) throughout all parameter choices. The curves corresponding to the different subgroups are displayed in Figure \ref{fig1}, where we show several curves for the first subgroup. 
 \medskip

The rejection probabilities of the test \eqref{boot} are displayed in Table \ref{tab1}, \ref{tab2} and \ref{tab3} 
for the scenarios (A), (B) and (C), respectively.
 Note that the $E_0$-coefficients are estimated in all scenarios.
 We begin with a discussion of the case (A) in Table \ref{tab1}.  Here we consider two  cases:  first, we assume the Hill-parameters of the curves to be  known, resulting in $9$ parameters to be estimated overall (three for each curve); second we also estimate these  parameters, yielding $12$ unknown parameters. 
The numbers in brackets represent the results for the model in (A) where the Hill-coefficient is assumed to be known ($h=1$) and is not estimated.  We observe that the test keeps its nominal level $\alpha = 10\%$ whenever $d_\infty \geq  \Delta =0.1$ and has reasonable power  for $d_\infty < \Delta =0.1$. When all four parameters of the models are estimated the test \eqref{boot} is conservative: even at the boundary $d_\infty = \Delta$  the level is smaller than  $\alpha$. On the other hand, if the Hill-coefficients in all three models 
 are assumed to be known, the simulated level at the boundary is close to  $\alpha= 10\%$.
Fixing the Hill-parameter yields also a significant improvement in power. For example, under the alternative determined by the Japanese curve with $ED_{50} = 10$ and $E_{max} = 0.42$ (third row) the power of the test more than doubles in all four dosing scenarios, if we assume the Hill-coefficients of the curves to be known. For instance, for the equidistant design $\mathcal{D}_{=}$ the power improves from 0.317 to 0.672. A comparison  of the  sample size allocations  to the different dose levels shows that an equal allocation $\mathcal{D}_{=}$
yields a more powerful test than the designs $\mathcal{D}_{\neq}$. Similarly, using the equal sample sizes \eqref{det3a} for the populations yields a more powerful test than using a non-uniform design as \eqref{det3b}.

Next, we discuss the results for scenario (B) and (C), where all four parameters in the three models are estimated. The rejection probabilities for scenario (B) are displayed in Table \ref{tab2}.
Again, the test \eqref{boot} keeps its nominal level 
$\alpha = 10 \%$. Note that in this scenario  there are two cases corresponding to the boundary $d_\infty = \Delta =0.1$ and the quality of the approximation is different in these cases. In the  case  $h= 3.5$ and $E_{\max} =0.40$ the approximation of the nominal level at the
boundary of the hypotheses is much more accurate as in the case $h=0.3$ and $E_{\max} =0.47$, 
in particular for equal sample sizes of the populations (as specified in \eqref{det3a}) and 
a corresponding equidistant design. Similarly to scenario (A) power is improved by equal 
sizes for the populations and equal sample sizes at the different dose levels.
For some cases the results can be compared with the results in Table \ref{tab1}. For example, for  $h= 1.5$ and $E_{\max} =0.43$ we obtain $d_\infty = 0.04$, which corresponds to the case $ED_{50}=15$ and $E_{\max} = 0.44$ in Table \ref{tab1}.  In this case the alternative in scenario (B) is easier to detect than the alternative in (A), although both cases yield a maximal deviation $d_\infty = 0.04$. These observations indicate that  the power of the test \eqref{boot} is not completely determined by the distance 
$d_\infty $ but also depends on the properties of the curves.
The results in Table \ref{tab3} for the scenario (C) show a similar picture  as for cases (A) and (B) and confirm our findings. 

We mention again that, in  all three scenarios, the test (generally) performs best if the sample sizes for the subgroups are identical and the patients are allocated uniformly to the different dose levels. These  results suggest that, in order to improve the  power of the equivalence test \eqref{boot}, the best strategy is  to choose the subgroup (and dose group) sizes as uniform as possible. However, we emphasize that this rule of thumb is only applicable, if the variances in all groups and all dose levels are similar.

\begin{figure}[H]
    \centering
    \includegraphics[width=7.5cm, height=7.5cm]{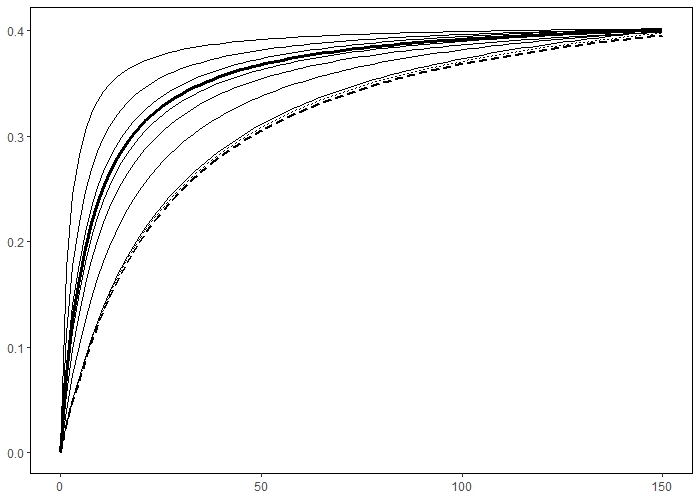}
    ~~
    \includegraphics[width=7.5cm, height=7.5cm]{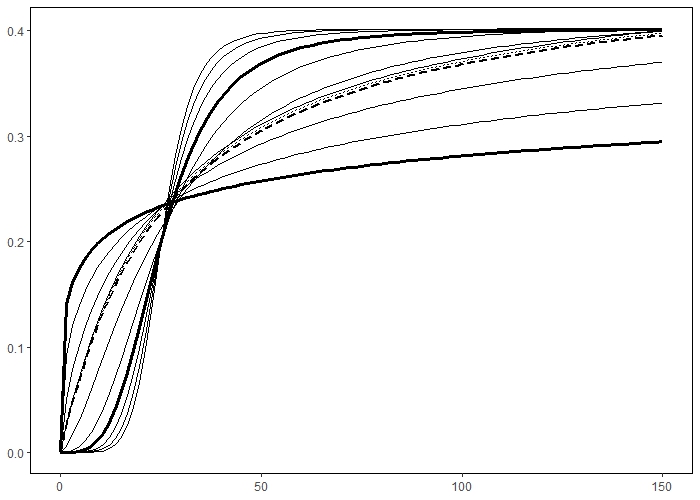}
    ~~
    \includegraphics[width=7.5cm, height=7.5cm]{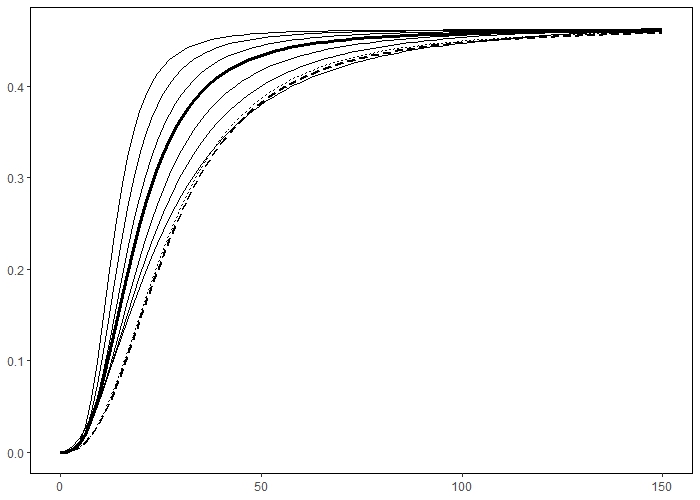}
    \caption{\it The dose response curves considered in the simulation study. Top row: scenario (A) (left) and scenario (B) (right); bottom row: scenario (C). The (different) solid  curves correspond to  the first subgroup (for various parameters $E_{\max}$, $ED_{50}$ and $h$), whereas the dashed and dotted curve correspond to the second and third subgroup, respectively. The thick solid curves satisfy $d_{\infty} = \Delta = 0.1$ and mark the boundary of the null hypothesis set.  
    }
    \label{fig1}
\end{figure}


\begin{table}[H]
    \centering
    \begin{tabular}{cc|c|c || c|c|}
    \cline{3-6}
     &  & \multicolumn{2}{c||}{ \eqref{det3a} } & \multicolumn{2}{c|}{ \eqref{det3b} } \\
     \hline
    \multicolumn{1}{|c}{$(ED_{50}, E_{max})$} & $d_{\infty}$ &  $\mathcal{D}_{=}$ & $\mathcal{D}_{\neq}^{(1)}$ & $\mathcal{D}_{=}$ & $ \mathcal{D}_{\neq}^{(2)}$ \\
       \hline
            \multicolumn{1}{|c}{(25, 0.47)}         & 0.00          &  0.996 (1.000)          &  0.994 (1.000)  &  0.954 (0.986) &  0.952 (0.996)            \\
            \multicolumn{1}{|c}{(15, 0.44)}         & 0.04          &  0.884 (0.990)          &  0.838 (0.972)  &  0.704 (0.898) &  0.670 (0.892)           \\
            \multicolumn{1}{|c}{(10, 0.42)}         & 0.07          &  0.317 (0.672)          &  0.278 (0.590)  &  0.224 (0.450) &  0.198 (0.448)            \\
            \multicolumn{1}{|c}{(8, 0.42)}          & 0.09          &  0.077 (0.228)          &  0.078 (0.206)  &  0.090 (0.200) &  0.062 (0.192)            \\ 
            \multicolumn{1}{|c}{\textbf{(7, 0.42)}} & \textbf{0.10} &  \textbf{0.027 (0.094)} &  \textbf{0.034 (0.082)}  &  \textbf{0.030 (0.092)}  &  \textbf{0.022 (0.098)}             \\ 
            \multicolumn{1}{|c}{(6, 0.42)}          & 0.11          &  0.007 (0.018)          &  0.014 (0.022)  &  0.014 (0.020) &  0.010 (0.046)            \\
            \multicolumn{1}{|c}{(4, 0.41)}          & 0.14          &  0.000 (0.000)          &  0.000 (0.002)  &  0.000 (0.000) &  0.002 (0.000)            \\
            \multicolumn{1}{|c}{(2, 0.40)}          & 0.19          &  0.000 (0.000)          &  0.000 (0.000)  &  0.000 (0.000) &  0.000 (0.000)            \\
            \hline
    \end{tabular}
    \caption{ \it Scenario (A): Simulated rejection probabilities of the test \eqref{boot} in Algorithm \ref{alg2} for subgroup standard deviation $\sigma_\ell = 0.1$, significance level $\alpha = 0.1$ and equivalence threshold $\Delta = 0.1$ for different sample sizes of the subgroups and different dose group sizes. Results in brackets are obtained without estimation of the Hill-parameters of the curves.}
    \label{tab1}
\end{table}

\begin{table}[H]
    \centering
    \begin{tabular}{cc| c|c||c|c|}
    \cline{3-6}
    &  & \multicolumn{2}{c||}{ \eqref{det3a} } & \multicolumn{2}{c|}{ \eqref{det3b} } \\
     \hline
       \multicolumn{1}{|c}{$(h, E_{max})$} & $d_{\infty}$ &  $\mathcal{D}_{=}$ & $\mathcal{D}_{\neq}^{(1)}$ & $\mathcal{D}_{=}$ & $ \mathcal{D}_{\neq}^{(2)}$ \\
       \hline 
            \multicolumn{1}{|c}{\textbf{(0.3,  0.47)}} & \textbf{0.10}  &  \textbf{0.068}  &  \textbf{0.072}   &  \textbf{0.072}   &  \textbf{0.09}                    \\
            \multicolumn{1}{|c}{(0.5,  0.47)}          & 0.06           &  0.59            &  0.550            &  0.420   &  0.454                    \\
            \multicolumn{1}{|c}{(0.75, 0.47)}          & 0.03           &  0.958           &  0.954            &  0.864   &  0.848                    \\
            \multicolumn{1}{|c}{(1, 0.47)}             & 0.00           &  0.994               &  0.998                &  0.950    &  0.944                    \\
            \multicolumn{1}{|c}{(1.5, 0.43)}           & 0.04           &  0.978           &  0.944            &  0.828   &  0.842             \\
            \multicolumn{1}{|c}{(2.5, 0.41)}           & 0.08           &  0.440           &  0.408            &  0.322   &  0.292             \\
            \multicolumn{1}{|c}{\textbf{(3.5, 0.40)}}  & \textbf{0.10}  &  \textbf{0.102}  &  \textbf{0.092}   &  \textbf{0.08}   &  \textbf{0.078}             \\ 
            \multicolumn{1}{|c}{(4.5, 0.40)}           & 0.12           &  0.022           &  0.036   &  0.044   &  0.042            \\ 
            \multicolumn{1}{|c}{(5.5, 0.40)}           & 0.13           &  0.004           &  0.020   &  0.018   &  0.020             \\
            \multicolumn{1}{|c}{(6.5, 0.40)}           & 0.14           &  0.000              &  0.008   &  0.016   &  0.022             \\
            \hline
    \end{tabular}
    \caption{\it Scenario (B): Simulated rejection probabilities of the test \eqref{boot} in Algorithm \ref{alg2} for subgroup standard deviation $\sigma_\ell = 0.1$, significance level $\alpha = 0.1$ and equivalence threshold $\Delta = 0.1$ for different sample sizes of the subgroups and different dose group sizes.}
    \label{tab2}
\end{table}

\begin{table}[H]
    \centering
    \begin{tabular}{cc| c|c||c|c|}
    \cline{3-6}
    &  & \multicolumn{2}{c||}{ \eqref{det3a} } & \multicolumn{2}{c|}{ \eqref{det3b} } \\
     \hline
      \multicolumn{1}{|c}{$(h, ED_{50}, E_{max})$}  & $d_{\infty}$    &  $\mathcal{D}_{=}$ & $\mathcal{D}_{\neq}^{(1)}$ & $\mathcal{D}_{=}$ & $ \mathcal{D}_{\neq}^{(2)}$ \\
       \hline
            \multicolumn{1}{|c}{(2, 25, 0.47)}               & 0.03           &  0.966          &  0.946          &  0.852          &  0.782                    \\
            \multicolumn{1}{|c}{(2.25, 23, 0.47)}            & 0.04           &  0.935          &  0.850          &  0.760          &  0.696             \\
            \multicolumn{1}{|c}{(2.5, 21, 0.46)}             & 0.06           &  0.646          &  0.550          &  0.394          &  0.348             \\
            \multicolumn{1}{|c}{\textbf{(2.75, 18.5, 0.46)}} & \textbf{0.10}  &  \textbf{0.072} &  \textbf{0.052} &  \textbf{0.058} &  \textbf{0.042}             \\ 
            \multicolumn{1}{|c}{(3, 17, 0.46)}               & 0.13           &  0.004          &  0.008          &  0.008          &  0.006            \\ 
            \multicolumn{1}{|c}{(3.25, 15, 0.46)}            & 0.16           &  0.000          &  0.000          &  0.000          &  0.000             \\
            \multicolumn{1}{|c}{(3.5, 13, 0.46)}             & 0.20           &  0.000          &  0.000          &  0.000          &  0.000             \\
            \hline
    \end{tabular}
    \caption{\it Scenario (C): Simulated rejection probabilities of the test \eqref{boot} in Algorithm \ref{alg2} for subgroup standard deviation $\sigma_\ell = 0.1$, significance level $\alpha = 0.1$ and equivalence threshold $\Delta = 0.1$ for different sample sizes of the subgroups and different dose group sizes.}
    \label{tab3}
\end{table}

\subsection{Assessing similarity of all subgroups with a full population} \label{sec32}

In this section we investigate the finite sample performance of the bootstrap procedure defined in Algorithm \ref{alg3} for the same three scenarios (A), (B), (C) and dose designs defined in Section \ref{sec31}. We consider the case where all three subgroups are compared with the full population, i.e $m = k = 3$. The results of the simulation are displayed in Tables \ref{tab4}-\ref{tab6}. \medskip

The simulation results for Algorithm \ref{alg3} are in line with its theoretical validity established in Theorem \ref{boot2}, that is, the test keeps its nominal level $\alpha = 10 \%$ under the null hypothesis $d_{\infty, \infty} \geq \Delta = 0.1$ and has acceptable power under the alternative hypothesis. Overall, the finite sample performance of Algorithm \ref{alg3} in the three considered scenarios is very similar to that of Algorithm \ref{alg2} presented in the previous section. We therefore omit a more detailed discussion for the sake of brevity.



\begin{table}[H]
    \centering
    \begin{tabular}{cc| c|c || c|c|}
    \cline{3-6}
    &  & \multicolumn{2}{c||}{ \eqref{det3a} } & \multicolumn{2}{c|}{ \eqref{det3b} } \\
    \hline
       \multicolumn{1}{|c}{$(ED_{50}, E_{max})$}  & $d_{\infty,\infty}$ &  $\mathcal{D}_{=}$ & $\mathcal{D}_{\neq}^{(1)}$ & $\mathcal{D}_{=}$ & $ \mathcal{D}_{\neq}^{(2)}$ \\
       \hline
            \multicolumn{1}{|c}{(25, 0.47)}         & 0.00           &  0.998 (1.000) &  0.994 (1.000)  &  0.962 (0.986)  &  0.950 (1.000)            \\
            \multicolumn{1}{|c}{(15, 0.44)}         & 0.04           &  0.872 (0.992) &  0.832 (0.982)  &  0.690 (0.940)  &  0.664 (0.918)           \\
            \multicolumn{1}{|c}{(10, 0.42)}         & 0.07           &  0.344 (0.696) &  0.250 (0.624)  &  0.258 (0.464)  &  0.232 (0.412)            \\
            \multicolumn{1}{|c}{(8, 0.42)}          & 0.09           &  0.090 (0.252) &  0.078 (0.218)  &  0.072 (0.166)  &  0.062 (0.184)            \\ 
            \multicolumn{1}{|c}{\textbf{(7, 0.42)}} & \textbf{0.10}  &  \textbf{0.030 (0.100)} &  \textbf{0.038 (0.102)}  &  \textbf{0.022 (0.072)}  &  \textbf{0.028 (0.100)}            \\ 
            \multicolumn{1}{|c}{(6, 0.42)}          & 0.12           &  0.006 (0.016) &  0.008 (0.020)  &  0.006 (0.016)  &  0.008 (0.032)            \\
            \multicolumn{1}{|c}{(4, 0.41)}          & 0.15           &  0.000 (0.000) &  0.000 (0.000)  &  0.000 (0.004)  &  0.000 (0.004)            \\
            \multicolumn{1}{|c}{(2, 0.40)}          & 0.19           &  0.000 (0.000) &  0.000 (0.000)  &  0.000 (0.000)  &  0.000 (0.000)            \\
            \hline
    \end{tabular}
    \caption{ \it Scenario (A): Simulated rejection probabilities of the test \eqref{allboot} in Algorithm \ref{alg3} for subgroup standard deviation $\sigma_\ell = 0.1$, significance level $\alpha = 0.1$ and equivalence threshold $\Delta = 0.1$ for different sample sizes of the subgroups and different dose group sizes. Results in brackets are obtained without estimation of the Hill-parameters of the curves.}
    \label{tab4}
\end{table}

\begin{table}[H]
    \centering
    \begin{tabular}{cc| c|c||c|c|}
    \cline{3-6}
    &  & \multicolumn{2}{c||}{ \eqref{det3a} } & \multicolumn{2}{c|}{ \eqref{det3b} } \\
    \hline
       \multicolumn{1}{|c}{$(h, E_{max})$} & $d_{\infty, \infty}$ &  $\mathcal{D}_{=}$ & $\mathcal{D}_{\neq}^{(1)}$ & $\mathcal{D}_{=}$ & $ \mathcal{D}_{\neq}^{(2)}$ \\
       \hline 
            \multicolumn{1}{|c}{\textbf{(0.3,  0.47)}} & \textbf{0.10}  &  \textbf{0.038}  &  \textbf{0.044}   &  \textbf{0.036}   &  \textbf{0.060}                    \\
            \multicolumn{1}{|c}{(0.5,  0.47)}          & 0.06           &  0.454           &  0.460            &  0.384            &  0.368                    \\
            \multicolumn{1}{|c}{(0.75, 0.47)}          & 0.03           &  0.930           &  0.890            &  0.802            &  0.762                    \\
            \multicolumn{1}{|c}{(1, 0.47)}             & 0.00           &  0.992           &  0.986            &  0.926            &  0.936                 \\
            \multicolumn{1}{|c}{(1.5, 0.43)}           & 0.04           &  0.966           &  0.922            &  0.804            &  0.806             \\
            \multicolumn{1}{|c}{(2.5, 0.41)}           & 0.08           &  0.262           &  0.278            &  0.212            &  0.244             \\
            \multicolumn{1}{|c}{\textbf{(3.5, 0.40)}}  & \textbf{0.10}  &  \textbf{0.052}  &  \textbf{0.060}   &  \textbf{0.072}   &  \textbf{0.084}             \\ 
            \multicolumn{1}{|c}{(4.5, 0.40)}           & 0.12           &  0.020           &  0.008            &  0.020            &  0.032          \\ 
            \multicolumn{1}{|c}{(5.5, 0.40)}           & 0.13           &  0.002           &  0.006            &  0.008            &  0.026             \\
            \multicolumn{1}{|c}{(6.5, 0.40)}           & 0.14           &  0.002           &  0.004            &  0.012            &  0.016            \\
            \hline
    \end{tabular}
    \caption{\it Scenario (B): Simulated rejection probabilities of the test \eqref{allboot} in Algorithm \ref{alg3} for subgroup standard deviation $\sigma_\ell = 0.1$, significance level $\alpha = 0.1$ and equivalence threshold $\Delta = 0.1$ for different sample sizes of the subgroups and different dose group sizes.}
\end{table}

\begin{table}[H]
    \centering
    \begin{tabular}{cc| c|c||c|c|}
    \cline{3-6}
    &  & \multicolumn{2}{c||}{ \eqref{det3a} } & \multicolumn{2}{c|}{ \eqref{det3b} } \\
     \hline
       \multicolumn{1}{|c}{$(h, ED_{50}, E_{max})$}   & $d_{\infty, \infty}$    &  $\mathcal{D}_{=}$ & $\mathcal{D}_{\neq}^{(1)}$ & $\mathcal{D}_{=}$ & $ \mathcal{D}_{\neq}^{(2)}$ \\
       \hline
            \multicolumn{1}{|c}{(2, 25, 0.47)}               & 0.03           &  0.952          &  0.928          &  0.868          &  0.802                    \\
            \multicolumn{1}{|c}{(2.25, 23, 0.47)}            & 0.04           &  0.916          &  0.850          &  0.754          &  0.722             \\
            \multicolumn{1}{|c}{(2.5, 21, 0.46)}             & 0.06           &  0.604          &  0.538          &  0.462          &  0.354             \\
            \multicolumn{1}{|c}{\textbf{(2.75, 18.5, 0.46)}} & \textbf{0.10}  &  \textbf{0.074} &  \textbf{0.056} &  \textbf{0.062} &  \textbf{0.050}             \\ 
            \multicolumn{1}{|c}{(3, 17, 0.46)}               & 0.13           &  0.000          &  0.004          &  0.010          &  0.010            \\ 
            \multicolumn{1}{|c}{(3.25, 15, 0.46)}            & 0.16           &  0.000          &  0.000          &  0.000          &  0.000             \\
            \multicolumn{1}{|c}{(3.5, 13, 0.46)}             & 0.20           &  0.000          &  0.000          &  0.000          &  0.000             \\
            \hline
    \end{tabular}
    \caption{\it Scenario (C): Simulated rejection probabilities of the test \eqref{allboot} in Algorithm \ref{alg3} for subgroup standard deviation $\sigma_\ell = 0.1$, significance level $\alpha = 0.1$ and equivalence threshold $\Delta = 0.1$ for different sample sizes of the subgroups and different dose group sizes.}
    \label{tab6}
\end{table}

\section{Numerical example}
\label{example}

In this section we illustrate the proposed methodology analyzing a multi-regional dose finding trial example. 
We apply the methodology to the data from the dose finding study described in Section 7 of \cite{biesheuvel2002many} and re-analyzed in \cite{dette2018equivalence}. The original data set is available in the \texttt{R} package \texttt{DoseFinding} (see \cite{dosefinding}). 
In this study 369 patients with Irritable Bowel Syndrome (IBS) are investigated at five blinded doses 0 (placebo), 1, 2, 3 and 4 with the primary endpoint being a baseline-adjusted abdominal pain score. The different dose groups in the study are of approximately equal size.

\medskip
Since no information about the region is available in the original dataset, we randomly allocated the 369 patients to one of three regional subgroups (i.e., Japan, North America, and Europe) with probabilities $p_1 = 1/7$, $p_2 = 3/7$ and $p_3 = 3/7$, respectively. Accordingly, we also use these proportions in the definition of the overall population dose response function \eqref{det1}. The resulting data set consists of 58 Japanese, 141 North American and 170 European patients. We assume that the subgroup dose response functions are given by three-parametric E-Max models as defined in \eqref{emax}, where we assume a fixed Hill-coefficient $h=1$. 
The fitted dose response curves of the subgroups based on the maximum likelihood estimates are then given by
\begin{align*} 
\mu_J(d) & = 0.38 + \dfrac{0.66 \cdot d  }{d + 3.94 } \quad , \quad \mu_A(d) = 0.00 + 
\dfrac{0.68 \cdot d}{d + 1.41} \quad , \quad \mu_E(d) = -0.03 + \dfrac{ 0.90 \cdot d}{d + 0.85},
\end{align*}
representing the Japanese, North American and European subgroup, respectively. These subgroup curves, together with the corresponding dose response data, are displayed in Figure \ref{fittedcurves1} which also shows the estimated effects at the dose levels with corresponding 90 \% confidence intervals. In particular, the subgroup error variances are estimated as $\hat \sigma_J^2 = 0.58 , \ \hat \sigma_A^2 = 0.67 $ and $ \hat \sigma_E^2 = 0.72 $. Figure \ref{fittedcurves2} presents a more detailed comparison between the three curves and the average population curve. 

\medskip

\begin{figure}[H]
    \centering
    \includegraphics[width=5cm, height=4 cm]{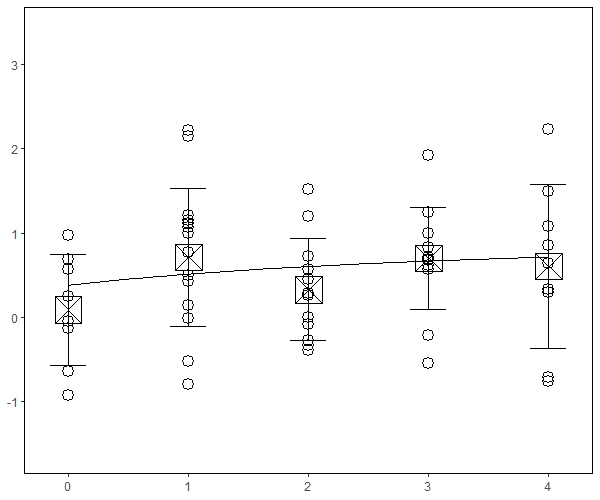}
    \includegraphics[width=5cm, height=4cm]{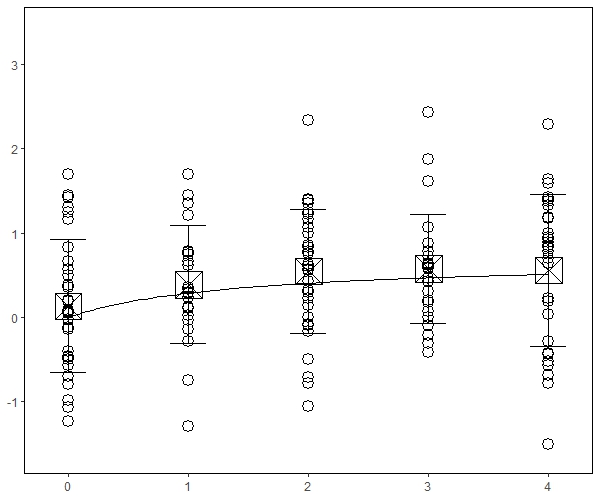}
    \includegraphics[width=5cm, height=4cm]{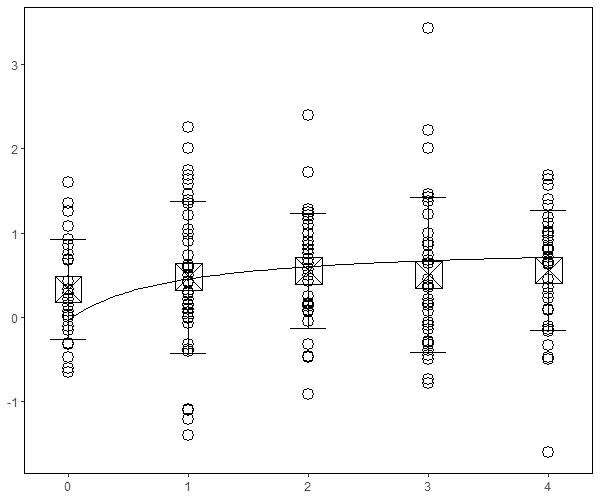}
    \caption{\it The dose response data and fitted E-Max curves for the Japanese (left), North American (middle) and European (right) subgroups in the trial. Circles represent responses, boxes represent the mean of the responses of a given dose group and bars denote $90$ \% confidence intervals for the corresponding means.}
    \label{fittedcurves1}
\end{figure}

\begin{figure}[H]
    \centering
    \includegraphics[width = 14cm, height = 8cm]{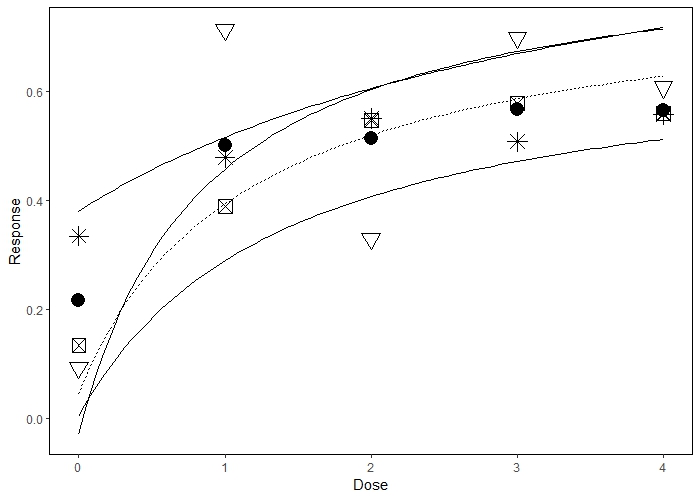}
    \caption{\it The fitted subgroup (solid) and full population (dotted) dose response curves based on the simulated dataset in the case study. The top, middle and bottom solid curve correspond to the Japanese, European and North American subgroup, respectively. Means: $\triangledown$ = Japan, $\ast$ = Europe , $\boxtimes$ = North America, $\bullet$ = Population.}
    \label{fittedcurves2}
\end{figure}

We now investigate the similarity of subgroup and population dose response curves using Algorithm \ref{alg2} and Algorithm \ref{alg3}. We use the subscripts ``J" (Japan), ``A" (North America) and ``E" (Europe) to clarify which subgroup is compared to the full population. Throughout this section we fix the similarity threshold as $\Delta = 0.4$ and the bootstrap quantiles are calculated by $B= 1000$ bootstrap replications. \medskip

First, we test similarity of the European subgroup with the full population by the test \eqref{boot} in Algorithm \ref{alg2}. The corresponding test statistic, i.e the maximum absolute deviation between the estimated European curve and estimated population curve, is $ \hat d_{\infty, E} = 0.087$. The bootstrap quantile (see step (4) in Algorithm \ref{alg2}) for $\Delta = 0.4$ and $\alpha = 0.05 \ (0.1) $ is $\hat q_{\alpha, \ts E}^* = 0.086 \ (0.113) $. We observe that $\hat d_{\infty, E} = 0.087 > 0.086 = \hat q_{0.05, E}^*$. Thus, the null hypothesis \eqref{hyp1} cannot be rejected at significance level $\alpha = 0.05$. However, since $\hat d_{\infty, E} = 0.087 < 0.113 = \hat q_{0.1, E}^*$ we can claim the dose response curves of the European subgroup and the overall population to be similar, that is $d_{\infty, E} < \Delta = 0.4$, at significance level $\alpha = 0.1$. Alternatively, we can compute the corresponding $p$-value in \eqref{pvalue1}. By assessing similarity to the European subgroup for $\Delta = 0.4$ we obtain $p_{\infty, E} = 0.067$ which is smaller than 0.1 but bigger than 0.05. \medskip

Next, we apply the test \eqref{boot} in Algorithm \ref{alg2} to assess whether the dose response curves of the Japanese, respectively, North American region and the full population are similar. The corresponding test statistics are given by $\hat d_{\infty, J} = 0.337$ and $\hat d_{\infty, A} = 0.116 $ and the bootstrap quantiles for the fixed threshold $\Delta$ and significance levels $\alpha = 0.05 \ (0.1) $ are $\hat q_{\alpha, \ts J}^* = 0.150 \ (0.208)$ and $\hat q_{\alpha, \ts A}^* = 0.042 \ (0.057) $. Therefore, at both significance levels, we do not claim the dose response relationship of the Japanese and North American subgroup to be similar to the full population, since each test statistic exceeds the corresponding quantile. Again, we come to the same conclusion by calculating the $p$-values of the corresponding tests which are given by $p_{\infty, J} = 0.156 $ and $p_{\infty, A} = 0.138 $. \medskip

Whether the subgroups are tested individually with Algorithm \ref{alg2} or simultaneously with Algorithm \ref{alg3} depends on the practitioners priorities. Individual tests with Algorithm \ref{alg2} allow for a more refined understanding of the subgroup similarities as some subgroup dose response curves might be similar to the population curve whereas others are not. However, if Algorithm \ref{alg2} is applied repeatedly, the significance level $\alpha$ might have to be reduced in each test in order to control the overall type 1 error rate which may yield conservative tests. Applying Algorithm \ref{alg3} once instead avoids the multiple testing problem on the one hand, but on the other hand only allows investigating if all considered subgroups are similar to the full population. \medskip

We now exemplarily test for similarity of all three subgroups with the full population simultaneously using the test \eqref{allboot} in Algorithm \ref{alg3}. The corresponding test statistic is calculated as $\hat d_{\infty, \infty} = 0.337 $ which is the maximum of the three individual test statistics $\hat d_{\infty, J}, \hat d_{\infty, A}$ and $\hat d_{\infty, E}$ and coincides with $\hat d_{\infty, J}$. The bootstrap quantile for $\Delta = 0.4$ (see step (4) of Algorithm \ref{alg3}) is estimated as $\hat q_{\alpha, \infty}^* = 0.189 \ts (0.235)$. The $p$-value at $\Delta = 0.4$ (see \eqref{pvalue2}) is given by $p_{\infty, \infty} = 0.201$. Hence we do not claim that all subgroup dose response functions are similar to the dose response function of the full population.

\medskip

Figure \ref{pvalues} shows the $p$-values of the two tests in relation to the equivalence threshold $\Delta$ varying in the interval $[0,1]$. As one might expect (see also Remark \ref{rem2}), larger thresholds $\Delta$ generally yield smaller $p$-values and thus a higher chance of rejecting the null hypothesis stating ``no similarity" at any given significance level. For example, if $\Delta$ is close to one, the $p$-values are nearly zero. This illustrates the general fact that, for a large enough threshold $\Delta$, any subgroup dose response curve(s) can be claimed similar to the population curve. However, clearly, the larger the threshold $\Delta$ the less meaning any such claim will carry for practical purposes. \medskip

Observing Figure \ref{pvalues} one may wonder why the $p$-values stay (approximately) constant for thresholds below the corresponding test statistic of the performed test (see, exemplarily, the dotted line in the right panel of Figure \ref{pvalues}). We note that this is a consequence of the definition of the constrained mle defined in step (2) of both Algorithms. If the test statistic exceeds the similarity threshold $\Delta$ the bootstrap data is always generated from the unconstrained mle which does not depend on $\Delta$. Consequently, the $p$-values produced for such thresholds will be (approximately) the same.

\begin{figure}[H]
    \centering
    \includegraphics[width=7cm, height=7cm]{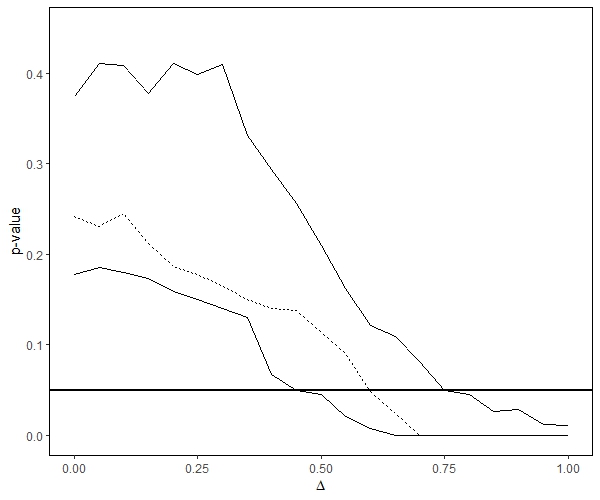}
    \includegraphics[width=7cm, height=7cm]{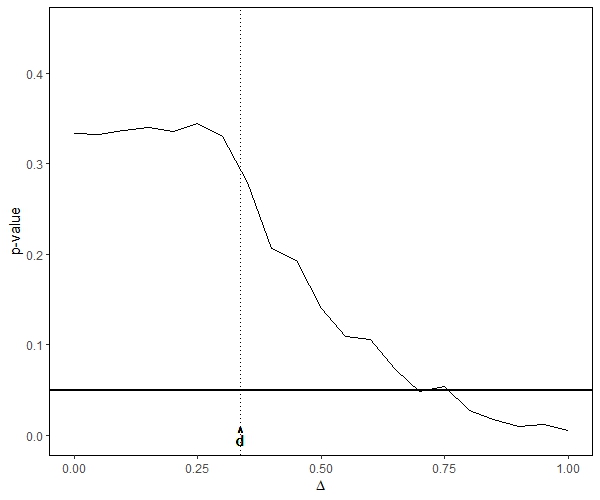}
    \caption{\it Left panel: $p$-values of the test \eqref{boot} in Algorithm \ref{alg2} assessing similarity between the Japanese (top solid), North American (dotted) and European (bottom solid) subgroup and the full population for different choices of the threshold $\Delta$. Right panel: $p$-values of the test \eqref{allboot} in Algorithm \ref{alg3} assessing similarity between all three subgroups and the full population for different choices of the threshold $\Delta$. The horizontal solid line marks a p-value of 0.05. The vertical dotted line in the right graphic marks the value of the test statistic $\hat d_{\infty, \infty}$.}
    \label{pvalues}
\end{figure}

\section{Conclusions and future research}
\label{disc}

In this paper, we develop inference tools for assessing the similarity between dose response curves of one or several subgroups and the full population in  multi-regional clinical trials. Our approach is based on a statistical test for the null hypothesis that the maximum deviation between the dose response curves  is larger than a given threshold $\Delta$. Thus rejection means that the curves of the subgroups deviate by at most $\Delta$ from the dose response curve corresponding to the full population over the full dose range. Critical values are determined by a novel parametric bootstrap under a constraint on the parameters such that the null hypothesis is satisfied.
 An essential ingredient of our approach is the specification of the threshold $\Delta$, which has to be carefully discussed for each application with the clinical team. Alternatively, our approach can be used  to define a measure of evidence for similarity with a controlled type I error rate, as pointed out in Remark \ref{rem2}.  

Our main focus in this paper is on the maximum deviation between the curves, which makes the  definition and interpretation of the threshold relatively easy. However,  other  measures such as the area between the curves might also be   useful in applications, and an interesting problem of future research is to extend our approach to such distances.  
A further important direction of future research is to extend our approach to situations, where the proportions  $p_\ell$ of the subgroups are not known (as assumed in this paper) and have to be estimated from the data. In this case 
it is not reasonable  to work with deterministic sample sizes $n_\ell $ for the subgroups and patients  have to be randomly selected from the full population for the trial. We expect that the methodology can be extended to such situations. 
Finally, in contrast to  many publications on dose finding in  multi-regional trials, we did not include a model-selection step via, for example, MCP-Mod \citep{bretz2005combining} in our methodology, but worked with   pre-selected dose response models for the subgroups from the beginning. We also look forward to generalize our  results   to account for model  uncertainty in the future.

\bigskip\noindent
\textbf{Acknowledgements:}  
This research is supported by the European Union through the European Joint Programme on Rare Diseases under the European Union's Horizon 2020 Research and Innovation Programme Grant Agreement Number 825575 and by the Deutsche Forschungsgemeinschaft (DFG, German Research Foundation) – Project-ID 499552394 – SFB 1597.

\section{Appendix}
\label{appendix}
  \def\theequation{5.\arabic{equation}}	
  \setcounter{equation}{0}

\subsection{Basic assumptions and main statements} 

In this section, we provide details about the validity of the two tests \eqref{boot} and
\eqref{allboot} for the hypotheses \eqref{hyp1} and \eqref{hyp4}. We begin stating several assumptions that are required for all theoretical results in this paper.

\medskip

\textbf{Assumption 1:} The $k$ dose response functions $\mu_1(\cdot, \beta_1), \ldots , \mu_k(\cdot, \beta_k)$ depend on unknown parameter vectors $\beta_\ell \in \R^{\gamma_\ell}, \ \ell = 1, \ldots , k$. We define $\gamma = \sum_{\ell = 1}^{k} \gamma_\ell$ and summarize these unknown vectors in a single vector $\beta = (\beta_1^\top, ... , \beta_k^\top )^\top $ which belongs to the compact parameter space $\mathcal{B} \subseteq \R^{ \gamma }$.

   \smallskip
   
     \textbf{Assumption 2:} For $\ell = 1, \ldots , k$ the subgroup dose response function $\mu_\ell(d, \beta_\ell)$ is three times continuously differentiable with respect to $\beta_\ell$ and $d$.
    \smallskip
    
     \textbf{Assumption 3:} For $\ell = 1, \ldots , k$ we have $n_\ell / n \to \kappa_\ell \in (0,1)$ and $n_{\ell,j} / n_\ell \to \kappa_{\ell,j} \in (0,1)$ for $j = 1, \ldots , r$ as $n_\ell \to \infty$. 
    
\medskip

For a statement of our first main result we introduce the random variable
\begin{equation}
    \label{det51}
   T:=   \max \Bigl\{ \max_{d \in \mathcal{E}^{+}} G(d) , \ts \max_{d \in \mathcal{E}^{-}} -G(d) \Bigr\} ,
\end{equation}
where  
 $
 \mathcal{E}^{\pm} := \{ d \in \mathcal{D} : \mu_1(d, \beta_1) - \bar \mu(d ,\beta) = \pm d_\infty \} ,
 $
the  process $G = \{ G(d) \}_{d \in \mathcal{D }} $  is defined by 
 \begin{equation}
\label{det53}     
    G(d) := \big( \tfrac{\partial (\mu_1(d, b_1) - \bar \mu(d , b)) }{\partial b} \big)^{T} \Big | _{ \ts b = \beta} \ts Z, ~~~ d \in \mathcal{D}
 \end{equation}
    and 
    $Z$ is a  centered $\gamma$-dimensional normal distributed random variable with block diagonal covariance matrix  
     \begin{align}
\label{matrix}
    \Sigma &= {\rm diag} ( \tfrac{1}{\kappa_1} \Sigma_1^{-1},  \ldots , \tfrac{1}{\kappa_k} \Sigma_k^{-1}) \in \R^{\gamma \times \gamma},
\end{align}
where
\begin{align*}
    \Sigma_\ell = \dfrac{1}{\sigma_{\ell}^2} \sum_{j=1}^{r} \kappa_{\ell, j} \Big(\dfrac{\partial}{\partial b_{\ell}} \mu_{\ell}(d_{j},b_{\ell})\Big|_{b_{\ell}=\beta_{\ell}}  \Big)\ts \Big( \dfrac{\partial}{\partial b_{\ell}} \mu_{\ell}(d_{j}, b_{\ell})\Big |_{b_{\ell}=\beta_{\ell}}\Big)^\top \in \R^{\gamma_\ell \times \gamma_\ell}, \quad \ell = 1, \ldots , k.
\end{align*}

 \begin{Thm}
 \label{boot1}
     \textit{Let Assumptions 1-3 be satisfied and assume that the random variable $T$ in \eqref{det51} has a continuous distribution function. Furthermore, let $\alpha \in (0,1)$ be small enough, such that the $\alpha$-quantile of $T$ is negative. Then the test defined by \eqref{boot} for the hypotheses \eqref{hyp1} is consistent and has asymptotic level $\alpha$. More precisely,
     \begin{itemize}
         \item[(1)] if the null hypothesis in \eqref{hyp1} is satisfied, then we have 
         \begin{align*}
            \limsup_{n \to \infty} \mathbb P( \hat d_\infty < \hat q_\alpha^* ) \leq \alpha.
         \end{align*}
         \item[(2)] if the null hypothesis in \eqref{hyp1} is satisfied and the set 
         \begin{align*}
             \mathcal{E} = \{ d \in \mathcal{D} : |\mu_1(d, \beta_1) - \bar \mu(d, \beta) | = d_\infty  \}
         \end{align*}
         consists of one point, then we have
         \begin{align*}
             \lim_{n \to \infty} \mathbb P(\hat d_\infty < \hat q_\alpha^*) = \begin{cases}
                 0,& \ d_\infty > \Delta, \\
                 \alpha,& \ d_\infty = \Delta.
             \end{cases}
         \end{align*}
         \item[(3)] if the alternative hypothesis in \eqref{hyp1} is satisfied, then we have
         \begin{align*}
          \lim_{n \to \infty}     \mathbb P( \hat d_\infty < \hat q_\alpha^* ) = 1.
         \end{align*}
     \end{itemize}
     }
 \end{Thm}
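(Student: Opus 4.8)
My plan is to derive all three statements from the joint asymptotics of the test statistic $\hat d_\infty$ and the bootstrap quantile $\hat q_\alpha^*$. The first ingredient is Theorem \ref{thm2}: combining the asymptotic normality $\sqrt n(\hat\beta-\beta)\tod Z\sim\mathcal N(0,\Sigma)$ of the mle (Assumptions 1--3) with a Danskin-type directional expansion of the map $\beta\mapsto d_\infty(\beta)=\max_{d\in\mathcal D}|\mu_1(d,\beta_1)-\bar\mu(d,\beta)|$ around the true $\beta$, one obtains $\sqrt n(\hat d_\infty-d_\infty)\tod T$, with $T$ the variable in \eqref{det51} and $G$ the Gaussian process in \eqref{det53}. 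The genuinely new step is the bootstrap analogue. Conditionally on the data, $\sqrt n(\hat\beta^*-\hat{\hat\beta})\tod\mathcal N(0,\Sigma)$ in probability --- the parametric bootstrap CLT for the mle, valid because $\hat\sigma_\ell^2$ and $\hat{\hat\beta}$ are consistent and the bootstrap data follow the same Gaussian model --- and transferring the directional expansion gives
\begin{align*}
\sqrt n\bigl(\hat d_\infty^*-d_\infty(\hat{\hat\beta})\bigr)\tod T\qquad\text{conditionally, in probability.}
\end{align*}
Since $T$ has a continuous distribution and its $\alpha$-quantile, say $q_{T,\alpha}$, is negative by assumption, this yields $\hat q_\alpha^*=d_\infty(\hat{\hat\beta})+n^{-1/2}q_{T,\alpha}+o_{\mathbb P}(n^{-1/2})$; and by construction $d_\infty(\hat{\hat\beta})=\max(\hat d_\infty,\Delta)$, since it equals $\hat d_\infty$ when $\hat d_\infty\ge\Delta$ and equals $\Delta$ when $\hat d_\infty<\Delta$ because $\tilde\beta$ lies on the surface $\{d_\infty(\cdot)=\Delta\}$.

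Granting this, the rest is bookkeeping. The rejection event is $\{\hat d_\infty<\max(\hat d_\infty,\Delta)+n^{-1/2}q_{T,\alpha}+o_{\mathbb P}(n^{-1/2})\}$. On $\{\hat d_\infty\ge\Delta\}$ the inequality reads $0<q_{T,\alpha}+o_{\mathbb P}(1)$, whose probability tends to $0$; on $\{\hat d_\infty<\Delta\}$ it reads $\sqrt n(\hat d_\infty-\Delta)<q_{T,\alpha}+o_{\mathbb P}(1)$, and since $q_{T,\alpha}<0$ this event lies (asymptotically) inside $\{\hat d_\infty<\Delta\}$. Hence
\begin{align*}
\mathbb P(\hat d_\infty<\hat q_\alpha^*)=\mathbb P\bigl(\sqrt n(\hat d_\infty-\Delta)<q_{T,\alpha}+o_{\mathbb P}(1)\bigr)+o(1).
\end{align*}
If $d_\infty>\Delta$, then $\sqrt n(\hat d_\infty-\Delta)\topr+\infty$ and the right-hand side $\to0$; if $d_\infty=\Delta$, then $\sqrt n(\hat d_\infty-\Delta)=\sqrt n(\hat d_\infty-d_\infty)\tod T$ and the right-hand side $\to\mathbb P(T<q_{T,\alpha})=\alpha$ by continuity; if $d_\infty<\Delta$, then $\sqrt n(\hat d_\infty-\Delta)\topr-\infty$ and the right-hand side $\to1$. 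This establishes (2) and (3) --- where $\mathcal E$ is a singleton, so the directional derivative is an ordinary derivative and the displayed expansions are exact --- and the interior and boundary computations already give the bound in (1) at all null configurations with $\#\mathcal E=1$.

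The remaining effort, and the main obstacle, is the bootstrap delta step when $\mathcal E$ is not a singleton, which is what (1) requires in full generality. Then $d_\infty(\cdot)$ is only directionally differentiable, so the Hadamard functional delta method does not push the CLT through the bootstrap, and one must control the random extremal set $\mathcal E(\hat{\hat\beta})=\argmax_{d\in\mathcal D}|\mu_1(d,\hat{\hat\beta}_1)-\bar\mu(d,\hat{\hat\beta})|$ directly. Since $\hat{\hat\beta}\topr\beta$ on the boundary (and converges to a point of $\{d_\infty(\cdot)=\Delta\}$ under the alternative), upper hemicontinuity of the argmax correspondence puts $\mathcal E(\hat{\hat\beta})$ eventually into an arbitrarily small neighbourhood of $\mathcal E$; together with the path continuity of $G$ this makes the conditional law of $\sqrt n(\hat d_\infty^*-d_\infty(\hat{\hat\beta}))$ asymptotically stochastically dominated by that of $T$, whence $\hat q_\alpha^*\le\max(\hat d_\infty,\Delta)+n^{-1/2}q_{T,\alpha}+o_{\mathbb P}(n^{-1/2})$ and the computation above gives $\limsup_n\mathbb P(\hat d_\infty<\hat q_\alpha^*)\le\alpha$ on all of $H_0$. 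Making the phrases ``asymptotically stochastically dominated'' and ``$o_{\mathbb P}(n^{-1/2})$ uniformly over $\hat{\hat\beta}$'' precise is the delicate point, and it is exactly here that the continuity of the law of $T$ and the smoothness in Assumptions 1--3 enter.
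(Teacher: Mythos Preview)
Your approach is correct for parts (2) and (3), and the overall structure---bootstrap CLT for $\hat\beta^*$, directional delta method, quantile expansion $\hat q_\alpha^*=\max(\hat d_\infty,\Delta)+n^{-1/2}q_{T,\alpha}+o_{\mathbb P}(n^{-1/2})$, then case analysis---matches the paper's in spirit. The genuine difference is in how the general (non-singleton $\mathcal E$) case of part (1) is handled.

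You first assert $\sqrt n(\hat d_\infty^*-d_\infty(\hat{\hat\beta}))\tod T$ conditionally and only later retreat to a stochastic-domination inequality, obtained by controlling the random extremal set $\mathcal E(\hat{\hat\beta})$ via upper hemicontinuity of the argmax. As you note yourself, this is the delicate point: the bootstrap does \emph{not} reproduce the law of $T$ when $d_\infty(\cdot)$ is only directionally differentiable, and making your domination argument precise requires uniform control of the directional expansion over random centers.

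The paper sidesteps this entirely by working at the process level. It establishes $\sqrt n(\hat g^*-\hat{\hat g})\tod G$ conditionally (valid because this is a \emph{linear} functional of $\hat\beta^*-\hat{\hat\beta}$), then applies the \emph{fixed} sublinear map $D_g$---the directional Hadamard derivative of $\|\cdot\|_\infty$ at the \emph{true} $g$---via the continuous mapping theorem to obtain $\tilde T_n^*:=D_g(\sqrt n(\hat g^*-\hat{\hat g}))\tod T$. The inequality then drops out of the sub-additivity of $D_g$: from
\[
\sqrt n(\hat d_\infty^*-d_\infty)=D_g(\sqrt n(\hat g^*-g))+o_{\mathbb P}(1),\qquad
\sqrt n(\hat{\hat d}_\infty-d_\infty)=D_g(\sqrt n(\hat{\hat g}-g))+o_{\mathbb P}(1),
\]
one subtracts and uses $D_g(a)-D_g(b)\le D_g(a-b)$ to get $\sqrt n(\hat d_\infty^*-\hat{\hat d}_\infty)\le\tilde T_n^*+o_{\mathbb P}(1)$, with equality precisely when $\mathcal E$ is a singleton. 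This one-line algebraic step replaces your entire argmax/hemicontinuity argument and yields the stochastic domination you need without ever invoking a bootstrap delta method for a non-differentiable functional.
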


Next, we establish a similar result for the test  
\eqref{allboot}. For this purpose we introduce the random variable 
 \begin{align}
    \label{det52}
       S:=  \max \Big \{ \max_{ (i,d) \in \mathcal{\tilde E}^{+}} G( (i,d) ) , \ts \max_{ (i,d) \in \mathcal{\tilde E}^{-}} -G( (i,d) ) \Big \} ,
    \end{align}
 where
 \begin{align*}
     \mathcal{\tilde E}^{\pm} := \{ (i,d) \in \{1, ... , m \} \times \mathcal{D} : \mu_i(d, \beta_i) - \bar \mu(d ,\beta) = \pm d_{\infty, \infty} \} ,
 \end{align*}
the process $G = \{ G( (i,d) ) \}_{ (i,d) \in \{1, ... , m \} \times \mathcal{D }} $ is defined by 
    $$
    G( (i,d) ) := \big( \tfrac{\partial (\mu_i(d, b_i) - \bar \mu(d , b)) }{\partial b} \big)^{T} \Big | _{ \ts b = \beta} \ts Z, ~~~ i \in \{1, ... , m \}, \ d \in \mathcal{D}
    $$ 
    and 
    $Z$ is the centered $\gamma$-dimensional normal distributed random variable defined in \eqref{det53}.

\begin{Thm}
    \label{boot2}
    \textit{Let Assumptions 1-3 be satisfied and assume that the random variable $S$ in \eqref{det52} has a continuous distribution function. Furthermore, let $\alpha \in (0,1)$ be small enough, such that the $\alpha$-quantile of $S$ is negative. Then the test defined by \eqref{allboot} for the hypotheses \eqref{hyp4} is consistent and has asymptotic level $\alpha$. More precisely,
     \begin{itemize}
         \item[(1)] if the null hypothesis in \eqref{hyp4} is satisfied, then we have 
         \begin{align*}
            \limsup_{n \to \infty} \mathbb P( \hat d_{\infty, \infty} < \hat q_{\alpha, \infty}^* ) \leq \alpha.
         \end{align*}
         \item[(2)] if the null hypothesis in \eqref{hyp4} is satisfied and the set 
         \begin{align*}
             \tilde{\mathcal{E}} = \{ (i,d) \in \{ 1, ... , m \} \times \mathcal{D} : |\mu_i(d, \beta_i) - \bar \mu(d, \beta) | = d_{\infty, \infty}  \}
         \end{align*}
         consists of one point, then we have
         \begin{align*}
             \lim_{n \to \infty} \mathbb P(\hat d_{\infty, \infty} < \hat q_{\alpha, \infty}^*) = \begin{cases}
                 0,& \ d_{\infty, \infty} > \Delta, \\
                 \alpha,& \ d_{\infty, \infty} = \Delta.
             \end{cases}
         \end{align*}
         \item[(3)] if the alternative hypothesis in \eqref{hyp4} is satisfied, then we have
         \begin{align*}
          \lim_{n \to \infty} \mathbb P( \hat d_{\infty, \infty} < \hat q_{\alpha, \infty}^* ) = 1.
         \end{align*}
     \end{itemize}
     }
\end{Thm}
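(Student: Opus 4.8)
The plan is to follow the route of the proof of Theorem \ref{boot1}, replacing the index set $\mathcal{D}$ throughout by the compact product set $I:=\{1,\dots,m\}\times\mathcal{D}$. Write $h(i,d;b):=\mu_i(d,b_i)-\bar\mu(d,b)$, so that $d_{\infty,\infty}(b)=\sup_{(i,d)\in I}|h(i,d;b)|=:\|h(\cdot;b)\|_\infty$, viewing $h(\cdot;b)$ as an element of $C(I)$ (continuous by Assumption~2, $I$ compact by Assumption~3). First I would establish the limiting law of the test statistic: under Assumptions~1--3 the unconstrained mle satisfies $\sqrt{n}(\hat\beta-\beta)\tod Z$ with $Z\sim N(0,\Sigma)$, $\Sigma$ as in \eqref{matrix}; a Taylor expansion of $h$ in $b$ around $\beta$, uniform over $I$ by the smoothness in Assumption~2 and compactness, then gives weak convergence of $\sqrt{n}\,(h(\cdot;\hat\beta)-h(\cdot;\beta))$ in $C(I)$ to the Gaussian process $G$ of \eqref{det52}. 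Since the sup-norm functional $\phi\colon C(I)\to\R,\ f\mapsto\|f\|_\infty$, is Hadamard directionally differentiable at $f=h(\cdot;\beta)$ with derivative $\phi'_f(g)=\max\{\sup_{\tilde{\mathcal{E}}^{+}}g,\ \sup_{\tilde{\mathcal{E}}^{-}}(-g)\}$, the functional delta method for directionally differentiable maps yields $\sqrt{n}(\hat d_{\infty,\infty}-d_{\infty,\infty})\tod S$.

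Next I would analyse the constrained bootstrap of Algorithm~\ref{alg3}. Conditionally on the data, $\sqrt{n}(\hat\beta^{*}-\hat{\hat\beta})\tod Z$ in probability, so the same delta-method argument with base point $h(\cdot;\hat{\hat\beta})$ gives, conditionally, $\sqrt{n}(\hat d_{\infty,\infty}^{*}-d_{\infty,\infty}(\hat{\hat\beta}))\tod\phi'_{h(\cdot;\hat{\hat\beta})}(G)$. The crucial point is that $\phi'_{(\cdot)}(g)$ is upper semicontinuous in its base point: if $b_n\to\beta$, then any limit point of a sequence $(i_n,d_n)$ with $|h(i_n,d_n;b_n)|=\|h(\cdot;b_n)\|_\infty$ lies in $\tilde{\mathcal{E}}$, whence $\limsup_n\phi'_{h(\cdot;b_n)}(g)\le\phi'_{h(\cdot;\beta)}(g)$ for every $g\in C(I)$, while $\phi'_f(g)$ is Lipschitz in $g$. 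By an extended continuous mapping theorem the conditional bootstrap limit is therefore stochastically dominated by $S$, with equality in law when $\tilde{\mathcal{E}}$ is a singleton (since then the extremal set of $h(\cdot;b_n)$ collapses to that single point). Hence, provided $\hat{\hat\beta}\topr\beta$, the bootstrap quantile satisfies $\hat q^{*}_{\alpha,\infty}=d_{\infty,\infty}(\hat{\hat\beta})+n^{-1/2}(c_n+o_{\mathbb{P}}(1))$ with $\limsup_n c_n\le S_\alpha<0$, and $c_n\to S_\alpha$ in the singleton case.

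It remains to combine these facts in the three regimes, as for Theorem~\ref{boot1}. In the interior of the null, $d_{\infty,\infty}>\Delta$, consistency of $\hat d_{\infty,\infty}$ forces $\hat{\hat\beta}=\hat\beta\to\beta$ eventually, so $\hat q^{*}_{\alpha,\infty}<\hat d_{\infty,\infty}$ with probability tending to one and $\mathbb{P}(\hat d_{\infty,\infty}<\hat q^{*}_{\alpha,\infty})\to0$. On the boundary, $d_{\infty,\infty}=\Delta$, the truth lies in the constraint set $\{b:d_{\infty,\infty}(b)=\Delta\}$, so the constrained mle is consistent, $\tilde\beta\topr\beta$, with $d_{\infty,\infty}(\tilde\beta)=\Delta$; on $\{\hat d_{\infty,\infty}<\Delta\}$ rejection is equivalent to $\sqrt n(\hat d_{\infty,\infty}-\Delta)<c_n+o_{\mathbb{P}}(1)$, while on $\{\hat d_{\infty,\infty}\ge\Delta\}$ rejection has asymptotically vanishing probability; using $\sqrt n(\hat d_{\infty,\infty}-\Delta)\tod S$ and the assumed continuity of the law of $S$, this yields $\mathbb{P}(\text{reject})\to\mathbb{P}(S<S_\alpha)=\alpha$ in the singleton case and $\limsup\le\alpha$ in general, proving (1) and (2). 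Under the alternative, $d_{\infty,\infty}<\Delta$, eventually $\hat{\hat\beta}=\tilde\beta$, which converges to a pseudo-true parameter $\beta^{\dagger}$ maximising the expected log-likelihood over $\{b:d_{\infty,\infty}(b)=\Delta\}$, so $\hat d_{\infty,\infty}^{*}\topr d_{\infty,\infty}(\beta^{\dagger})=\Delta$ and $\hat q^{*}_{\alpha,\infty}\topr\Delta>d_{\infty,\infty}$, while $\hat d_{\infty,\infty}\topr d_{\infty,\infty}$, giving $\mathbb{P}(\hat d_{\infty,\infty}<\hat q^{*}_{\alpha,\infty})\to1$, which is (3).

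The main obstacle is the boundary analysis: because the directional derivative of the sup-norm fails to be continuous in its base point, one must control the extremal set associated with $\hat{\hat\beta}$ through the upper-semicontinuity argument above, and separately verify consistency of the constrained estimator $\tilde\beta$ (to $\beta$ on the boundary of the null, and to the pseudo-true value $\beta^{\dagger}$ under the alternative). Once these ingredients are in place, the remaining steps are a verbatim transcription of the single-subgroup argument with the product index set $I$, and I expect no additional difficulty from passing from $m=1$ to general $1\le m\le k$.
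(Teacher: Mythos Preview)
Your proposal is essentially correct and follows the same overall architecture as the paper: replace the index set $\mathcal{D}$ by the product set $\{1,\dots,m\}\times\mathcal{D}$, establish weak convergence of the process and then of $\sqrt{n}(\hat d_{\infty,\infty}-d_{\infty,\infty})$ via the directional Hadamard derivative of the sup-norm (this is the paper's Theorem~\ref{thm3}), and then treat the three regimes exactly as in the proof of Theorem~\ref{boot1}.

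The one genuine difference is in how you control the bootstrap law under the null. You apply the delta method at the \emph{random} base point $h(\cdot;\hat{\hat\beta})$ and then argue by upper semicontinuity of $f\mapsto\phi'_f(g)$ together with an extended continuous mapping theorem that the conditional bootstrap limit is stochastically dominated by $S$. The paper instead applies the delta method at the \emph{true} base point $g=h(\cdot;\beta)$ to both $\hat d_{\infty,\infty}^{*}$ and $\hat{\hat d}_{\infty,\infty}$, obtaining
\[
\sqrt{n}(\hat d_{\infty,\infty}^{*}-d_{\infty,\infty})=D_g(\sqrt{n}(\hat g^{*}-g))+o_{\mathbb P}(1),\qquad
\sqrt{n}(\hat{\hat d}_{\infty,\infty}-d_{\infty,\infty})=D_g(\sqrt{n}(\hat{\hat g}-g))+o_{\mathbb P}(1),
\]
and then subtracts and uses \emph{subadditivity} of the directional derivative $D_g$ to get $\sqrt{n}(\hat d_{\infty,\infty}^{*}-\hat{\hat d}_{\infty,\infty})\le D_g(\sqrt{n}(\hat g^{*}-\hat{\hat g}))+o_{\mathbb P}(1)$, with equality when $\tilde{\mathcal E}$ is a singleton. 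Since $D_g$ is Lipschitz in its argument for fixed $g$, the ordinary (not extended) continuous mapping theorem then gives the conditional limit $S$ directly. The subadditivity device avoids the need to track how the extremal set of $h(\cdot;\hat{\hat\beta})$ behaves and sidesteps the delicate issue that your upper-semicontinuity inequality is only pointwise in $g$; it yields the needed inequality at the level of random variables in one line. Your route can be made rigorous, but the paper's is shorter and more robust.
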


\subsection{Proof of Theorem \ref{boot1}}

\subsubsection{A preliminary result}

We begin with a preliminary result, which is the basic ingredient for the proof of Theorem  \ref{boot1}.

\begin{Thm}
\label{thm2}
\it Let Assumptions 1-3 be satisfied. Then, as $n_1, \ldots , n_k \to \infty$,  the statistic $\hat d_{\infty}$ defined in \eqref{estim1} satisfies
    \begin{align*}
        \sqrt{n}(\hat d_{\infty} - d_{\infty}) \xrightarrow{d} 
        T , 
    \end{align*}
    where the random variable $T$ is defined in \eqref{det51}.
\end{Thm}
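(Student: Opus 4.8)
\textbf{Proof proposal for Theorem \ref{thm2}.}

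The plan is to derive the limit law of $\sqrt{n}(\hat d_\infty - d_\infty)$ by combining the asymptotic normality of the maximum likelihood estimator $\hat\beta$ with a functional delta method argument applied to the max-type functional $\beta \mapsto d_\infty(\beta) = \max_{d\in\mathcal D}|\mu_1(d,\beta_1) - \bar\mu(d,\beta)|$. First I would establish that, under Assumptions 1--3, the mle $\hat\beta$ is $\sqrt n$-consistent and asymptotically normal: since the model \eqref{data1} is a product of independent normal regression models across subgroups with block-separable log-likelihood \eqref{mle}, the Fisher information is block diagonal, and standard M-estimation theory (using the three-times differentiability from Assumption 2, compactness from Assumption 1, and the limiting design proportions from Assumption 3) gives $\sqrt n(\hat\beta - \beta) \xrightarrow{d} Z$, where $Z \sim \mathcal N(0,\Sigma)$ with $\Sigma$ the block-diagonal matrix in \eqref{matrix}. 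Here it is convenient to record that $\sqrt{n_\ell}(\hat\beta_\ell - \beta_\ell) \xrightarrow{d} \mathcal N(0, \Sigma_\ell^{-1})$ and that $n_\ell/n \to \kappa_\ell$, so the $\ell$th block of the limiting covariance is $\kappa_\ell^{-1}\Sigma_\ell^{-1}$.

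Next I would introduce the map $h_\beta(d) := \mu_1(d,\beta_1) - \bar\mu(d,\beta)$ and the associated functional $\Psi(g) := \max_{d\in\mathcal D}|g(d)|$ on $C(\mathcal D)$, so that $d_\infty = \Psi(h_\beta)$ and $\hat d_\infty = \Psi(h_{\hat\beta})$. By a first-order Taylor expansion in $\beta$ (justified by Assumption 2), $h_{\hat\beta}(d) = h_\beta(d) + \big(\tfrac{\partial h_b(d)}{\partial b}\big|_{b=\beta}\big)^\top (\hat\beta-\beta) + R_n(d)$, where the remainder $R_n$ is uniformly of order $O_{\mathbb P}(\|\hat\beta-\beta\|^2) = O_{\mathbb P}(1/n)$ in the sup-norm, using that $\mathcal D$ is compact and the second derivatives are continuous hence bounded. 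The key analytic step is then the directional Hadamard differentiability of the sup-norm functional $\Psi$: for $g_n \to g$ in $C(\mathcal D)$ one has $\sqrt n(\Psi(g + g_n/\sqrt n) - \Psi(g)) \to \max\{\max_{d\in\mathcal E^+} \dot g(d),\ \max_{d\in\mathcal E^-}(-\dot g(d))\}$ whenever $g_n/\sqrt n \to \dot g/\sqrt n$ suitably, where $\mathcal E^{\pm} = \{d : h_\beta(d) = \pm d_\infty\}$ are the sets where the maximum $|h_\beta|$ is attained with sign $\pm$. Applying this with $g = h_\beta$ and the perturbation direction $\dot g(d) = G(d) = \big(\tfrac{\partial h_b(d)}{\partial b}\big|_{b=\beta}\big)^\top Z$, together with the continuous mapping / extended continuous mapping theorem, yields $\sqrt n(\hat d_\infty - d_\infty) \xrightarrow{d} T = \max\{\max_{d\in\mathcal E^+}G(d), \max_{d\in\mathcal E^-}(-G(d))\}$, which is exactly \eqref{det51}.

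I expect the main obstacle to be making the differentiability-of-the-supremum step fully rigorous, since the extremal set $\mathcal E = \mathcal E^+ \cup \mathcal E^-$ need not be a singleton and the functional $\Psi$ is only directionally (not fully) Hadamard differentiable; one must argue carefully, via a uniform argument over $\mathcal D$ combined with the fact that outside a shrinking neighbourhood of $\mathcal E$ the function $|h_\beta|$ stays bounded away from $d_\infty$, that the rescaled increment localizes to $\mathcal E$. A clean way to handle this is: write $\hat d_\infty = \max_{d\in\mathcal D} \epsilon_{\hat\beta}(d)|h_{\hat\beta}(d)|$ is unnecessary; instead split $\mathcal D$ into a neighbourhood $\mathcal U_\delta$ of $\mathcal E$ and its complement, show the complement contributes nothing to the limit (its contribution is $d_\infty - c + O_{\mathbb P}(n^{-1/2})$ for some $c>0$, hence negligible after centering and scaling), and on $\mathcal U_\delta$ linearize $|h_{\hat\beta}|$ using the sign of $h_\beta$ (which is locally constant, $+$ on $\mathcal E^+$ and $-$ on $\mathcal E^-$). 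The continuity assumption on the distribution of $T$ is not needed for Theorem \ref{thm2} itself but will be used downstream; here only the above convergence in distribution is required. The secondary technical point is the uniform control of the Taylor remainder $R_n$ and of the gradient's continuity in $\beta$, which follows routinely from Assumption 2 and compactness of $\mathcal B$ and $\mathcal D$.
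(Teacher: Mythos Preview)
Your proposal is correct and follows essentially the same route as the paper: asymptotic normality of the mle $\hat\beta$, weak convergence of the difference process $\sqrt n\,(h_{\hat\beta}-h_\beta)$ in $\ell^\infty(\mathcal D)$, directional Hadamard differentiability of the sup-norm functional, and a delta-method step. The only notable difference is that what you flag as the ``main obstacle'' --- making the directional differentiability of $\Psi(g)=\|g\|_\infty$ rigorous when $\mathcal E$ is not a singleton --- is handled in the paper by citing off-the-shelf results: Theorem~2.1 of C\'arcamo, Cuevas and Rodr\'iguez (2020) for the Hadamard directional derivative of the supremum (which gives exactly your formula with $\mathcal E^\pm$), and Theorem~2.1 of Shapiro (1991) for the corresponding delta method. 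Your hands-on localization argument would work too, but invoking these references makes the proof a few lines long and avoids the $\mathcal U_\delta$-splitting entirely.
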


\begin{proof}
The proof is conducted in four steps. First, observe that under Assumptions 1-3 the maximum likelihood estimate  $\hat \beta$ defined in \eqref{mle} satisfies $\sqrt{n} ( \hat \beta - \beta ) \tod Z$, where $Z \sim \mathcal{N}_{\gamma}(0, \Sigma)$ with covariance matrix $\Sigma $ defined by \eqref{matrix}.

Second,  similar arguments as given for the proof of the process convergence in equation (A.7) of \cite{dette2018equivalence} show the weak convergence of the process
\begin{align*}
    \sqrt{n} \Big \{ \big ( \mu_1(d, \hat \beta_1) - \bar \mu(d, \hat \beta) \big )  - \big (  \mu_1(d, \beta_1) - \bar \mu(d, \beta)\big  ) \Big \}_{d \in \mathcal{D}}  \tod \{ G(d) \}_{d \in \mathcal{D}}
\end{align*}
in the space $\ell^{\infty} ({\mathcal{D}})$ of bounded real-valued functions on $\mathcal{D}$ equipped with the supremum-norm $\norm{\cdot}_\infty$,
where $G$ is the Gaussian process defined in \eqref{det53}. Third, note that the mapping
\begin{align*}
    \norm{ \cdot }_\infty:   
  \begin{cases} 
    \ell^{\infty}(\mathcal{D}) &\to \R, \\ 
    g &\to \norm{g}_\infty = \sup_{d \in \mathcal{D} } | g(x) | 
      \end{cases}
\end{align*}
is directionally Hadamard differentiable with respect to $(\ell^{\infty}(\mathcal{D}), \norm{ \cdot }_\infty )$ with directional Hadamard derivative at $g_0 \in \ell^{\infty}(\mathcal{D})$ given by 
\begin{align}
\label{supderiv}
 D_{g_0}: 
  \begin{cases}
    \ell^{\infty}(\mathcal{D}) &\to \R, \\
    g  &\to 
     D_{g_0}(g) = \max \{ \max_{d \in \mathcal{E}^{+}} g(d) , \ts \max_{d \in \mathcal{E}^{-}} - g(d) \}
      \end{cases}  ,
\end{align}
where $\mathcal{E}^{\pm} := \{ d \in \mathcal{D} : g_0(d) = \pm \norm{g_0}_{\infty} \} $ (see \cite{carcamo2020directional}, Theorem 2.1). Fourth,  we apply the delta method for directionally Hadamard differentiable functions (see \cite{shapiro1991asymptotic}, Theorem 2.1) to complete  the proof.
    
\end{proof}

\subsubsection{Proof of Theorem \ref{boot1}}

    Let $\mathcal{Y}
    = \{ Y_{\ell i j} |   i=1, \ldots , n_{\ell, j} , j=1, \ldots , r, \ell =1, \ldots , k \}
    $ denote the data and define the functions  
    \begin{align*}
   g(d) &:= \mu_1(d, \beta_1) - \bar \mu(d, \beta) ,  \\
    \hat{\hat g}(d) & := \mu_1(d, \hat{\hat{\beta}}_1) - \bar \mu(d, \hat{\hat{\beta}}) , \\
     \hat g^*(d) & := \mu_1(d, \hat \beta_1^*) - \bar \mu(d, \hat \beta^*) 
      \end{align*}
     and the corresponding maximum deviations 
     $\hat d_\infty^* := \| \hat g^* \|_\infty $ and $\hat{\hat{d}}_\infty := \| \hat{\hat g} \|_\infty $. Similar arguments as given for the proof of (A.25) in \cite{dette2018equivalence} yield the process convergence
     \begin{align*}
     \sqrt{n}( \hat g^* - \hat{\hat{g}} ) \tod \{ G(d) \}_{d \in \mathcal D}
     \end{align*}
     conditionally on $\mathcal{Y}$ in probability which implies 
     \begin{align}
     \label{key2}
     \tilde T_n^*:=  
     D_g( \sqrt{n}( \hat g^* - \hat{\hat{g}} ) ) \tod D_g( G ) \overset{d}{=} T,
     \end{align}
     by the continuous mapping theorem, where $ D_g $ denotes the directional Hadamard derivative \eqref{supderiv} at the function $g$. Moreover, we have
     \begin{align*}
         \sqrt{n}( \hat{\hat{d}}_\infty - d_\infty) & = D_g(\sqrt{n}(\hat{\hat{g}} - g)) + o_{\mathbb P}(1),\\
         \sqrt{n}( \hat{d}_\infty^* - d_\infty) & = D_g(\sqrt{n}(\hat{g}^* - g)) + o_{\mathbb P}(1).
     \end{align*}
     Then, subtracting the first equation from the second and using sub-additivity of the directional Hadamard derivative yields
\begin{align}
\label{key}
    \sqrt{n}(\hat d_\infty^* - \hat{\hat{d}}_\infty) \leq \tilde T_n^* + o_{\mathbb P}(1),
\end{align}
where there is equality  if the set $ \mathcal{E}$ consists of a single point
(note that in this case  the directional Hadamard derivative is linear). 
Building on these results we can derive part (1) and (2) of Theorem \ref{boot1}. In the case $d_\infty > \Delta$, note that
\begin{align*}
    \mathbb P( \hat d_\infty < \hat q_\alpha^*) =  \mathbb P( \hat d_\infty <\hat q_\alpha^*, \ \hat d_\infty \geq \Delta ) + \mathbb P( \hat d_\infty <\hat q_\alpha^*, \ \hat d_\infty < \Delta ),
\end{align*}
where the first probability converges to zero, 
which follows by similar arguments as given on page
$727$ in \cite{dette2018equivalence}. The second term converges to zero, since, by  Theorem \ref{thm2}, $\hat d_\infty \topr d_\infty > \Delta$ . In the case $d_\infty = \Delta$, note that
\begin{align*}
    \mathbb P( \hat d_\infty <\hat q_\alpha^*) =  \mathbb P( \hat d_\infty < \hat q_\alpha^*, \ \hat{\hat{d}}_\infty = \Delta ) + \mathbb P( \hat d_\infty <\hat q_\alpha^*, \ \hat{\hat{d}}_\infty > \Delta ),
\end{align*}
where the first probability sequence is asymptotically bounded above by (or equal to) $\alpha$, because of \eqref{key2}  and  \eqref{key} and the second probability sequence converges to zero, since $q_\alpha < 0$. Finally, in the case $d_\infty < \Delta$
statement (3) follows by the same arguments as given for the proof of (3.18) in Theorem 2 in \cite{dette2018equivalence}.



\subsection{Proof of Theorem \ref{boot2}}
\label{alg2proof}

The proof is analogous to the proof of Theorem \ref{boot1} and only needs some additional adjustments. We start by deriving the asymptotic error distribution of the estimator $\hat d_{\infty, \infty}$.

\subsubsection{A preliminary result}

\begin{Thm}
    \label{thm3}
    \it Let Assumptions 1-3 be satisfied. Then, as $n_1, \ldots , n_k \to \infty$,  the statistic $\hat d_{\infty, \infty}$ defined in \eqref{maxall} satisfies
    \begin{align*}
        \sqrt{n}(\hat d_{\infty, \infty} - d_{\infty, \infty}) \xrightarrow{d}  S, 
    \end{align*}
where the random variable $S$ is defined in \eqref{det52}.
\end{Thm}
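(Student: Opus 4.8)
The plan is to mirror, step for step, the proof of Theorem \ref{thm2}, the only change being that the dose range $\mathcal{D}$ is replaced by the disjoint union $\mathcal{I}:=\{1,\ldots,m\}\times\mathcal{D}$ of $m$ copies of the dose range, which is again a compact metric space when equipped with a product metric that declares points with different first coordinate to lie at distance $1$. On $\mathcal{I}$ we view $d_{\infty,\infty}$ as the supremum norm of the single bounded function $h(i,d):=\mu_i(d,\beta_i)-\bar\mu(d,\beta)$, and likewise $\hat d_{\infty,\infty}=\|\hat h\|_\infty$ with $\hat h(i,d):=\mu_i(d,\hat\beta_i)-\bar\mu(d,\hat\beta)$, so that the statistic $\hat d_{\infty,\infty}$ in \eqref{maxall}--\eqref{det22} becomes a sup-norm functional of a random element of $\ell^\infty(\mathcal{I})$.

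First I would invoke the asymptotic normality of the maximum likelihood estimator, $\sqrt{n}(\hat\beta-\beta)\tod Z$ with $Z\sim\mathcal{N}_\gamma(0,\Sigma)$ and $\Sigma$ as in \eqref{matrix}; this is word for word the first step of the proof of Theorem \ref{thm2}. Next I would establish the joint functional central limit theorem
\begin{align*}
\sqrt{n}\,\{\hat h(i,d)-h(i,d)\}_{(i,d)\in\mathcal{I}}\ \tod\ \{G((i,d))\}_{(i,d)\in\mathcal{I}}
\end{align*}
in $\ell^\infty(\mathcal{I})$, with $G$ the Gaussian process introduced above \eqref{det52}. This follows from the previous step exactly as equation (A.7) of \cite{dette2018equivalence}: by Assumption 2 each map $b\mapsto(\mu_i(\cdot,b_i)-\bar\mu(\cdot,b))$ is continuously differentiable, so a first-order Taylor expansion combined with the functional delta method, applied simultaneously to all $m$ coordinates, yields the claim, using that $\ell^\infty(\mathcal{I})$ is isometrically the $m$-fold product of $\ell^\infty(\mathcal{D})$ and that the $m$ limit processes are jointly Gaussian, being fixed linear images of the common vector $Z$.

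Then I would recall that the supremum norm $\|\cdot\|_\infty:\ell^\infty(\mathcal{I})\to\R$ is Hadamard directionally differentiable, with derivative at $g_0$ equal to $D_{g_0}(g)=\max\{\max_{\iota\in\mathcal{E}^{+}}g(\iota),\ \max_{\iota\in\mathcal{E}^{-}}(-g(\iota))\}$, where $\mathcal{E}^{\pm}=\{\iota\in\mathcal{I}:g_0(\iota)=\pm\|g_0\|_\infty\}$; this is Theorem 2.1 of \cite{carcamo2020directional}, which is valid for the sup-norm over an arbitrary compact metric space, in particular $\mathcal{I}$. Finally I would apply the delta method for Hadamard directionally differentiable maps (\cite{shapiro1991asymptotic}, Theorem 2.1) at $g_0=h$, observe that the resulting extremal sets $\mathcal{E}^{\pm}$ coincide with the sets $\tilde{\mathcal{E}}^{\pm}$ from \eqref{det52}, and conclude $\sqrt{n}(\hat d_{\infty,\infty}-d_{\infty,\infty})\tod D_h(G)\overset{d}{=}S$.

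The only genuinely new point relative to Theorem \ref{thm2} is the joint process convergence in the second step; everything else is purely notational. This is not difficult, since all $m$ processes are driven by the single estimator $\hat\beta$, so their joint limit is obtained from one application of the functional delta method rather than $m$ separate ones; the one thing to check is that the Gaussian limit has sample paths in $\ell^\infty(\mathcal{I})$ (which follows from the smoothness in Assumption 2, making $G$ uniformly continuous on each slice) so that the extremal sets $\tilde{\mathcal{E}}^{\pm}$ are well defined. Note that the continuity hypothesis on the distribution of $S$ stated in Theorem \ref{boot2} plays no role in the present result; it is only needed later, when Theorem \ref{thm3} is combined with the conditional process convergence of the bootstrap statistic to deduce the asymptotic level of the test \eqref{allboot}, in complete analogy with the passage from Theorem \ref{thm2} to Theorem \ref{boot1}.
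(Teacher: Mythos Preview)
Your proposal is correct and follows essentially the same route as the paper's own proof: both replace the index set $\mathcal{D}$ by the product $\{1,\ldots,m\}\times\mathcal{D}$, keep the first and fourth steps of the proof of Theorem~\ref{thm2} unchanged, and modify only the second step (process convergence in $\ell^\infty(\{1,\ldots,m\}\times\mathcal{D})$ via the functional delta method) and the third step (directional Hadamard differentiability of the sup-norm on this enlarged space, again via \cite{carcamo2020directional}). Your additional remarks on the joint Gaussianity of the $m$ limit processes and on the irrelevance of the continuity assumption on $S$ for this particular result are accurate but go slightly beyond what the paper spells out.
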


\begin{proof}
    We can copy the proof of Theorem \ref{thm2} and only need to change the second and third step, the first and fourth step can be left unchanged. To this end, only note that by the functional delta method
   \begin{align*}
    \sqrt{n} \Big \{ \big ( \mu_i(d, \hat \beta_i) - \bar \mu(d, \hat \beta) \big )  - \big (  \mu_i(d, \beta_i) - \bar \mu(d, \beta)\big  ) \Big \}_{ (i,d) \in \{1, ... , m\} \times \mathcal{D}}  \tod \{ G( (i,d) ) \}_{(i,d) \in \{1, ... , m\} \times \mathcal{D}}
\end{align*}
holds true in the space $\ell^{\infty} (\{1, ... , m\} \times {\mathcal{D}})$ of bounded real-valued functions on $\{1, ... , m\} \times {\mathcal{D}}$ equipped with the supremum-norm
$$
\norm{ g }_\infty = \sup_{(i,d) \in \{1, ... , m\} \times \mathcal{D}} | g(i,d) |
$$ 
and that the supremum-norm is  directionally Hadamard differentiable on the space $\ell^{\infty} (\{1, ... , m\} \times {\mathcal{D}})$ with derivative at $g_0$ given by
\begin{align*}
 D_{g_0}: 
  \begin{cases}
    \ell^{\infty}(\{1, ... , m\} \times {\mathcal{D}}) &\to \R, \\
    g  &\to 
     D_{g_0}(g) = \max \{ \max_{ (i,d) \in \mathcal{E}^{+}} g(i,d) , \ts \max_{ (i,d) \in \mathcal{E}^{-}} - g(i,d) \}
      \end{cases}  ,
\end{align*}
where $\mathcal{E}^{\pm} := \{ (i,d) \in \{1, ... , m\} \times {\mathcal{D}} : g_0(i,d) = \pm \norm{g_0}_{\infty} \} $.

\end{proof}

\subsubsection{Proof of Theorem \ref{boot2} }

Defining the processes $ g((i,d)) := \mu_i(d, \beta_i) - \bar \mu(d, \beta) $, $ \hat{\hat g}((i,d)) := \mu_i(d, \hat{\hat{\beta}}_i) - \bar \mu(d, \hat{\hat{\beta}}) $, $ \hat g^*((i,d)) := \mu_i(d, \hat \beta_i^*) - \bar \mu(d,  \hat \beta^*) $ as well as $\hat d_{\infty, \infty}^* := \| \hat g^* \|_\infty $ and $\hat{\hat{d}}_{\infty, \infty} := \| \hat{\hat g} \|_\infty $ over the set $\{1, ... , m \} \times \mathcal{D}$, we can use the same arguments as given in the proof of Theorem \ref{boot1}, where we employ Theorem \ref{thm3} instead of Theorem \ref{thm2}.

\bibliographystyle{apalike}
\bibliography{bibliography.bib}
      
\end{document}